\newtheorem{theorem}{Theorem}[section]
\newtheorem{corollary}[theorem]{Corollary}
\newtheorem{lemma}[theorem]{Lemma}
\theoremstyle{definition}
\newtheorem{definition}[theorem]{Definition}
\newtheorem{remark}[theorem]{Remark}
\newenvironment{proof}{\noindent{\it Proof.}}
\begin{document}

\begin{frontmatter}

\title{ Simultaneous controllability of wave sound propagations}


\author[Paestum]{Alexis Rodriguez Carranza}\ead{alexis@pg.im.ufrj.br},    
\author[Rome]{Luis J. Caucha}\ead{ljhony82@yahoo.com}
\author[Paestum]{Obidio Rubio Mercedes}\ead{obidior@yahoo.co.uk}
\address[Paestum]{Universidad Nacional de Trujillo, Perú}  
\address[Rome]{Universidad Nacional de Tumbes, Perú}                                           

\begin{keyword}                           
Controllability, wave sound, H.U.M
\end{keyword}                             

\begin{abstract}              
In this work we study one problem of mathematical interest for their applications in several topics in Applied Science. We study simultaneous controllability of a pair of systems which model the evolution of sound in a compressible flow considered as a transmission problem. We show the well posed of the problem. Furthermore provided appropriate conditions in the geometry of the domain are valid and suitable assumptions on the fluid, is possible to conduce the pair of systems to the equilibrium in a simultaneous way using only one control.
\end{abstract}

\end{frontmatter}

\section{Introduction}
In this work, we considered an equations system to describe an evolution of the wave sound or compressible fluids. A linear model well know is given by a system \cite{Leis}
\begin{equation}{\label{Par_Ecu_Ondas_ST1}}
		\left\{  \begin{aligned}
		& \frac{\partial u}{\partial t}+\alpha \nabla p = 0, 				  \quad \mbox{in}\quad\Omega\times(0,T) \\
		& \frac{\partial p}{\partial t}+\beta\mbox{div}(u) = 0,                  	       \quad \mbox{in}\quad\Omega\times(0,T) \\		
		& u.\eta  = Q, \quad \mbox{in}\quad S_0\times(0,T)\\
		& p = 0, \quad \mbox{in}\quad S_1\times(0,T)\\
		& u(x,0) = u_{0}(x), p(x,0) =p_{0}(x)
 		\end{aligned}\right.	
\end{equation}



 where  $p=p(x,t)$ is acoustic precision, $u=(u_1,u_2,u_3)$ and  $u_j=u_j(x,t)$ are fluid velocity field, $\alpha>0$ is the density of equilibrium and  $\beta>0$ is the compressibility factor of fluid. Here $\Omega$ is an open subset of $I\!\!R^3$ with regularity boundary conditions $S_0\cup S_1=\partial\Omega$ and $S_0\cap S_1=\emptyset$.\\
To solve the simultaneous controllability we considered a system given by

\begin{equation}{\label{Par_Ecu_Ondas_ST2}}
		 \left\{  \begin{aligned}
         &\frac{\partial v}{\partial t}+\gamma \nabla q = 0, 				\quad \mbox{in}\quad\Omega\times(0,T) \\
		& \frac{\partial q}{\partial t}+\tau\mbox{div}(v) = 0, 				\quad \mbox{in}\quad\Omega\times(0,T) \\		
		& q  = P, \quad \mbox{in}\quad S_0\times(0,T)\\
		& q = 0, \quad \mbox{in}\quad S_1\times(0,T)\\
		& v(x,0) =v_{0}(x), q(x,0) =q_{0}(x)
		\end{aligned}\right.
\end{equation}




where $\gamma>0$ and $\tau>0$.  $Q$ and $P$ in  (\ref{Par_Ecu_Ondas_ST1}) and (\ref{Par_Ecu_Ondas_ST2}), respectively; these are  control functions.
In $1986$, D.L. Russell\cite{Russell_2} and J.L.Lions \cite{Lions} proposed to solve a exact controllability problem for an  evolution model, using only one control function. They called that problem as simultaneous controllability.  The absences of dissipative effects as in (\ref{Par_Ecu_Ondas_ST1}) and (\ref{Par_Ecu_Ondas_ST2}), the problem present difficulties for the solution, see the examples \cite{Lagnese_3}, \cite{Kapitonov}, \cite{Perla} and \cite{Lions}, where they perturbed the multipliers used for the controllability.\\
The problem of simultaneous controllability for the systems (\ref{Par_Ecu_Ondas_ST1}) and (\ref{Par_Ecu_Ondas_ST2}) is to take a control for both of system using only one control function, i.e., given  $T>0$ any initial condition, $(u_0,p_0,v_0,q_0)$,  and final $(\tilde{u}_0,\tilde{p}_0,\tilde{v}_0,\tilde{q}_0)$ in appropriate functional space, find  $P(x,t)$ and  $Q(x,t)$ such that

\begin{enumerate}
   \item[a)] A solution  $\{u,p,v,q \}$ of (\ref{Par_Ecu_Ondas_ST1}) and (\ref{Par_Ecu_Ondas_ST2}) satisfied in $T$
			\[(u(.,T),p(.,T),v(.,T),q(.,T))=(\tilde{u}_0, \tilde{p}_0,\tilde{v}_0,\tilde{q}_0)\]
	\item[b)] The control function, $P(x,t)$, for 	(\ref{Par_Ecu_Ondas_ST2}) was given in terms of $Q(x,t)$.

\end{enumerate}



A method to solve the controllability problem is Hilbert Uniqueness Method (H.U.M) proposed by J.L.Lions, it is a construction  of an appropriate structure for the Hilbert space in the initial conditions space.\\
These structure are connected by uniqueness properties.  An important contribution to the controllability problems (\ref{Par_Ecu_Ondas_ST1}) and (\ref{Par_Ecu_Ondas_ST2}) were made by Kapitonov et. G. Perla Menzala \cite{Kapitonov}, \cite{Perla}. In  \cite{Perla} and \cite{Kapitonov} the author answered positively for a simultaneous control and They showed that the control  $P=-\frac{\beta}{\gamma} Q$ could be use to solve a problem.
In this work we study a controllability problem of these systems with a perspective for applications as a problem of transmission; this is described below.



	Given  $\sigma_0$ and $\sigma_1$ open limited subset and conexo in  $i\!\!r^{3}$, with  $\bar{\sigma}_1\subseteq \sigma_0$. Also $\omega=\sigma_0\setminus \bar{\sigma}_1$, we denoted $\partial\sigma_{0}=s_{0}, \quad\partial \sigma_{1}=s_1$. And fixed an integer $m>1$ and  $k=1,2,\dots,m$. For each $k$,  $b_{k}$ is an open subset and conexo, with regularity in the boundary such that, $\bar{\sigma}_1\subseteq b_k\subseteq\sigma_0$, $\bar{b}_k\subseteq b_{k+1}$. We put  $\omega_{0}=b_{1}\setminus\bar{\sigma}_1$, $\omega_k = b_{k+1}\setminus\bar{b}_k$, $k=1,2,\dots,m-1$ and $\omega_{m}=\sigma_{0}\setminus\bar{b}_m$. 
and, $\omega=\cup^{m}_{j=0}\omega_j$, for  $i\neq j$, we take  $\omega_i \cap \omega_j =\emptyset$ and $\partial\omega= s_0 \cup s_1$. Examples for this decomposition is showed in

\begin{figure}[H]
	\begin{center}
		\includegraphics[scale=0.7]{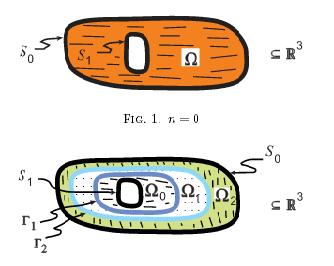}
		\caption{Case $m=0$ and $m=3$}
		\label{fig-multilayer}
	\end{center}
\end{figure}

 We need a solution defined by part on each sub domain; for that, we considered the systems (\ref{Par_Ecu_Ondas_ST1}) and (\ref{Par_Ecu_Ondas_ST2}) rewrite on sub domains $\Omega_k$,  and

\begin{equation}{\label{Par_Ecu_Ondas_Multilayer_1}}
	\left\{  \begin{array}{ccc}
				\frac{\partial u^{k}}{\partial t}+\alpha^{k} \nabla p^{k} &=& 0, \quad 							\mbox{in}\quad \Omega_{k}\times(0,T) \\
				\frac{\partial p^{k}}{\partial t}+\beta^{k}\mbox{div}(u^{k}) &=& 0, \quad 						\mbox{in}\quad\Omega_{k}\times(0,T) \\
				u^{k}(x,0) =&u^{k}_{0}(x),& p^{k}(x,0) =p^{k}_{0}(x)
 			 \end{array}\right. 
 			 \end{equation}

\begin{equation}{\label{Par_Ecu_Ondas_Multilayer_2}}
	\left\{  \begin{array}{ccc}
				\frac{\partial v^{k}}{\partial t}+\gamma^{k} \nabla q^{k} &=& 0, \quad 							\mbox{in}\quad\Omega_{k}\times(0,T) \\
				\frac{\partial q^{k}}{\partial t}+\tau^{k}\mbox{div}(v^{k}) &=& 0, \quad 						\mbox{in}\quad\Omega_{k}\times(0,T) \\		
				v^{k}(x,0) =&v^{k}_{0}(x),& q^{k}(x,0) =q^{k}_{0}(x)
 			 \end{array}\right.
 			 \end{equation}

$k=0,1,2,\dots,m$.


 with boundary conditions (\ref{Par_Ecu_Ondas_ST1}) and (\ref{Par_Ecu_Ondas_ST2}). The interfaces of transmission conditions $\Gamma_{k}=\partial\Omega_k$, given by

\begin{equation}{\label{Cond_Multilayer_1}}
	\left\{  \begin{array}{ccc}
				\alpha^{k-1} p^{k-1} &=& \alpha^{k} p^{k}\\
				\beta^{k-1}(u^{k-1}.\eta) &=& \beta^{k}(u^{k}.\eta)\\
				k=2,\dots, m, & (x,t)\in \Gamma_{k}\times (0,T)
 			 \end{array}\right. 
 			 \end{equation}

\begin{equation}{\label{Cond_Multilayer_2}}
	\left\{  \begin{array}{ccc}
				\gamma^{k-1} q^{k-1} &=& \gamma^{k} q^{k}\\
				\tau^{k-1}(v^{k-1}.\eta) &=& \tau^{k}(v^{k}.\eta)\\
				k=2,\dots, m, & (x,t)\in \Gamma_{k}\times (0,T)
 			 \end{array}\right. 
 			 \end{equation}




for the systems (\ref{Par_Ecu_Ondas_Multilayer_1}) and (\ref{Par_Ecu_Ondas_Multilayer_2}), respectively.\\ 
The functions $\alpha^{k}, \beta^{k}, \gamma^{k}$ and $\tau^{k}$ are the restriction for the functions $\alpha,\beta,\gamma,\tau$ on the systems (\ref{Par_Ecu_Ondas_ST1}) and (\ref{Par_Ecu_Ondas_ST2}), we assumed that those functions were constant by parts, strictly positive and we lost the continuity only in  $\Gamma_k$, $k=1,2,\dots,m$.\\
The objective in this section is to get the estimation of

\begin{equation}{\label{D_O_I}}
	\begin{aligned}
		&(T-T_0)\sum_{k=0}^{m}\int_{\Omega_k}\left[ \beta^k \mid u^k \mid^2 + \alpha^k(p^k)^2 + 		\tau^k \mid v^k \mid^2 + \gamma^k(q^k)^2 \right]dx \\
		& \leq C\int_{0}^{T}\int_{S_0}\left[ \alpha p- \tau( v.\eta) \right]^2\frac{\partial h}{\partial\eta}dS_0 dt.
	\end{aligned}
\end{equation}


For some $T_0>0$, $C>0$ and  $T>T_0$. The inequality  (\ref{D_O_I}) is named from an inequality of observation which is in the theorem \ref{Desigualdade_Final} assuming geometrical properties on domain $\Omega$ and in the interfaces  $\Gamma_k$.
 Such that, to prove (\ref{D_O_I}) we assumed monotonicity conditions in the coefficients of the systems (\ref{Par_Ecu_Ondas_Multilayer_1}) and (\ref{Par_Ecu_Ondas_Multilayer_2}). The requirement necessary were found by Lions\cite{Lions} in his study of transmission problem. Lagnese\cite{Lagnese_3} used the same hypothesis to prove the result of controllability for a hyperbolic problem.\\ 
Then this will be done.

\begin{enumerate}
 	\item We showed that  (\ref{Par_Ecu_Ondas_Multilayer_1})-(\ref{Cond_Multilayer_2}) is well posed problem, we used the semigroup theory \cite{Pazy}
 	\item We obtained an inequality of simultaneous observability, for both systems  (\ref{Par_Ecu_Ondas_Multilayer_1}) and (\ref{Par_Ecu_Ondas_Multilayer_2}), these we solved using a multipliers technique\cite{Komornik}
 	\item We applied the  H.U.M (Hilbert Uniqueness Method) to obtain the simultaneous controllability \cite{Lions}.
\end{enumerate}

\section{Functional spaces}


Given the Hilbert space $X_1=\left[ L^{2}(\Omega)\right]^{3}\times\left[ L^{2}(\Omega)\right]$, associate to  (\ref{Par_Ecu_Ondas_Multilayer_1}). We define an scalar product in  $X_1$, given by  $(\tilde{u},\tilde{p}),(u,p)\in X_1$, then:

\begin{equation}{\label{Prod_Int_1}}
\left< (u,p), (\tilde{u},\tilde{p})  \right>_{X_1}=\sum^{m}_{k=0}\int_{\Omega_k}\left\{ \beta^{k} u^{k} \Large\textbf{.}\tilde{u}^{k}+\alpha^{k}p^{k}\tilde{p}^{k} \right\}dx
\end{equation}

Consequently, we considered $X_2=\left[ L^{2}(\Omega)\right]^{3}\times\left[ L^{2}(\Omega)\right]$ associate to (\ref{Par_Ecu_Ondas_Multilayer_2}). We define a scalar product in $X_2$, as  $(\tilde{v},\tilde{q}),(v,q)\in X_2$, then:

\begin{equation}{\label{Prod_Int_2}}
\left< (v,q), (\tilde{v},\tilde{q})  \right>_{X_2}=\sum^{m}_{k=0}\int_{\Omega_k}\left\{ \tau^{k} v^{k}\Large\textbf{.}\tilde{v}^{k}+\gamma^{k}q^{k}\tilde{q}^{k} \right\}dx
\end{equation}


We have considered a total energy to the problem (\ref{Par_Ecu_Ondas_Multilayer_1}), (\ref{Par_Ecu_Ondas_Multilayer_2}), (\ref{Cond_Multilayer_1}), (\ref{Cond_Multilayer_2}) and the boundary conditions in (\ref{Par_Ecu_Ondas_ST1}), (\ref{Par_Ecu_Ondas_ST2}), as

\begin{equation}{\label{Energia_Total_P1}}
E(t)=\frac{1}{2}\sum_{k=0}^{m}\int_{\Omega_k}\left\{   \beta_k\mid u^k \mid^2 + \alpha^k (p^k)^{2} + \tau_k\mid v^k \mid^2 + \gamma^k (q^k)^{2} \right\}dx
\end{equation}


Making a rigorously way for the interfaces conditions, we can see a lemma ~\ref{Lema_Aux_Trans}; for more details see Perla et al.\cite{Perla}.


\begin{lemma}{\label{Lema_Aux_Trans}}
Given $\Omega$ bounded region in $I\!\!R^{3}$, with regularity in the boundary $\partial\Omega$.  The application  
\[
\begin{array}{ccc}
\left[ C^{1}(\bar{\Omega})  \right]^{3} &\rightarrow & C^{1}(\partial\Omega)\\
u=(u_1,u_2,u_3) & \rightarrow & u\Large\textbf{.}\eta
\end{array}
\]
where $\eta=\eta(x)$ is as exterior unit normal vector in $x\in\partial\Omega$. We can extend by continuity application
\[
\tilde{H}\longrightarrow H^{-1/2}(\partial\Omega)
\]
where $\tilde{H}=\left\{ u\in \left[ L^{2}(\Omega) \right]^{3},\quad \mbox{such that},\quad \mbox{div}(u)\in L^{2}(\Omega) \right\}$ and  $H^{-1/2}(\partial\Omega)$ is dual space of  $H^{1/2}(\partial\Omega)$
\end{lemma}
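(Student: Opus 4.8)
The plan is to realize $u\cdot\eta$ as a continuous linear functional on $H^{1/2}(\partial\Omega)$ by means of the divergence theorem, so that the extension is forced by duality rather than by a direct limiting argument on the boundary. The starting point is the classical Green identity: for $u\in[C^1(\bar\Omega)]^3$ and any $\phi\in C^1(\bar\Omega)$ one has
\[
\int_{\partial\Omega}(u\cdot\eta)\,\phi\,dS=\int_\Omega\bigl(u\cdot\nabla\phi+\phi\,\mbox{div}(u)\bigr)\,dx .
\]
The right-hand side makes sense for every $u\in\tilde{H}$ and every $\phi\in H^1(\Omega)$, which suggests defining the action of $u\cdot\eta$ on a boundary datum through this volume integral.

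Concretely, given $g\in H^{1/2}(\partial\Omega)$ I would invoke the surjectivity of the trace operator $\gamma_0:H^1(\Omega)\to H^{1/2}(\partial\Omega)$ together with the existence of a bounded right inverse, picking $\phi\in H^1(\Omega)$ with $\gamma_0\phi=g$ and $\|\phi\|_{H^1(\Omega)}\le C\|g\|_{H^{1/2}(\partial\Omega)}$, and then set
\[
\langle u\cdot\eta,\,g\rangle:=\int_\Omega\bigl(u\cdot\nabla\phi+\phi\,\mbox{div}(u)\bigr)\,dx .
\]
The first thing to check is independence of the chosen lifting $\phi$. If two liftings differ, their difference $\psi$ lies in $H^1_0(\Omega)$; by the very definition of the weak divergence $\mbox{div}(u)\in L^2(\Omega)$ and the density of $C_c^\infty(\Omega)$ in $H^1_0(\Omega)$, one obtains $\int_\Omega(u\cdot\nabla\psi+\psi\,\mbox{div}(u))\,dx=0$, so the functional is well defined.

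Boundedness then follows from Cauchy--Schwarz,
\[
|\langle u\cdot\eta,g\rangle|\le C\bigl(\|u\|_{[L^2(\Omega)]^3}+\|\mbox{div}(u)\|_{L^2(\Omega)}\bigr)\,\|g\|_{H^{1/2}(\partial\Omega)},
\]
which exhibits $u\cdot\eta$ as an element of $H^{-1/2}(\partial\Omega)$ whose norm is controlled by the graph norm of $u$ in $\tilde{H}$. Comparing with the Green identity above shows that on the smooth class $[C^1(\bar\Omega)]^3$ this functional coincides with the ordinary normal trace, so the construction is a genuine extension; being linear and bounded, it is continuous.

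The point requiring the most care is the density of $[C^1(\bar\Omega)]^3$ in $\tilde{H}$ for the graph norm $\|u\|^2+\|\mbox{div}(u)\|^2$, which is what justifies \emph{extending by continuity} rather than merely producing a bounded functional that happens to agree on smooth fields. This density, together with the existence of a bounded right inverse of $\gamma_0$ (equivalently, the trace theorem in $H^1(\Omega)$, which is where the regularity of $\partial\Omega$ enters), are the two classical ingredients I would cite; everything else is the duality bookkeeping sketched above.
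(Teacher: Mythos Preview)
Your argument is correct and is exactly the standard proof of the normal-trace theorem for $H(\mathrm{div};\Omega)$: define the trace by duality via Green's formula, check well-definedness using $H^1_0$ and the definition of the weak divergence, bound it using a continuous right inverse of the $H^1$ trace, and invoke density of smooth fields in the graph norm to identify the construction as the unique continuous extension.

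The paper itself does not prove this lemma; it merely states it and refers to Perla et al.\ for details. So there is nothing to compare against beyond noting that your sketch is the classical textbook argument (as in Girault--Raviart or Temam) that any such reference would reproduce. The two points you flag as requiring citation --- density of $[C^1(\bar\Omega)]^3$ in $\tilde H$ for the graph norm, and the bounded right inverse of the trace operator $\gamma_0:H^1(\Omega)\to H^{1/2}(\partial\Omega)$ under boundary regularity --- are indeed the only substantive inputs, and both are standard.
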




To simplify the notation we write  $u^k$ as $u$,  $\beta^k$ as $\beta$, the same way for all symbols in the region $\Omega_k$.
by the lemma \ref{Lema_Aux_Trans} is clearly that the spaces 
\[
	\begin{array}{ccc}
		H_1 = \left\{ (u,p)\in X_1,\quad\mbox{such that },\quad \left( -\alpha\nabla p, -\beta\mbox{div}(u) \right)\in X_1 \right\} & \subseteq & X_1 \\
		H_2 = \left\{ (v,q)\in X_2,\quad\mbox{such that},\quad \left( -\gamma\nabla q, -\tau\mbox{div}(v) \right)\in X_2 \right\} & \subseteq & X_2		 
	\end{array}
\]
we can define the sub spaces:

\[
Z_1 = \left\{ (u,p)\in H_1,\quad \mbox{such that } \begin{array}{cc}
\alpha^{k-1}p^{k-1}=& \alpha^{k}p^{k}\\
\beta^{k-1}(u^{k-1}\Large\textbf{.}\eta)= & \beta^{k}(u^{k}\Large\textbf{.}\eta)\\
u\Large\textbf{.}\eta=0\in S_0,& \quad p=0\in S_1
\end{array}, \quad \mbox{in}\quad \Gamma_{k}, k=2,\dots,m. \right\}
\]
and,
\[
Z_2 = \left\{ (v,q)\in H_2, \mbox{such that} \begin{array}{cc}
\gamma^{k-1}q^{k-1}=& \gamma^{k}q^{k}\\
\tau^{k-1}(v^{k-1}\Large\textbf{.}\eta)= & \tau^{k}(v^{k}\Large\textbf{.}\eta)\\
q=0, &\mbox{in}\quad \partial\Omega=S_0 \cup S_1
\end{array}, \quad\mbox{in}\quad \Gamma_{k},\quad k=2,\dots,m \right\}
\]


Observe that $\left[ C^{1}(\bar{\Omega}) \right]\subset Z_j, \quad j=1,2$. Also $Z_1$ and $Z_2$ are dense in  $X_1$ and  $X_2$, respectively. Considering the bounded operator
\[A_j: Z_j=D(A_j)\subseteq X_j\longrightarrow X_j\]
defined as 
\begin{enumerate}
	\item Given $(u,p)\in D(A_1)$, then, $A_{1}(u,p)=\left( -\alpha\nabla p, -\beta\mbox{div}(u)
	 \right)$
	\item Given $(v,q)\in D(A_2)$, then , $A_{2}(v,q)=\left( -\gamma\nabla q, -\tau\mbox{div}(v)
	 \right)$	 
\end{enumerate}


The adjoint operator of $A_1$ is denoted by $A_{1}^{*}$; It is calculated and given as :

\[
A^{*}_{1}(\tilde{u},\tilde{p})=\left( \alpha \nabla\tilde{p},\beta\mbox{div}(\tilde{u}) \right)
\]
and 

\[
D(A_{1}^{*}) = \left\{ (\tilde{u},\tilde{p})\in H_1,\quad\mbox{such that }, \begin{array}{cc}
\alpha^{k-1}\tilde{p}^{k-1}=& \alpha^{k}\tilde{p}^{k}\\
\beta^{k-1}(\tilde{u}^{k-1}.\eta)= & \beta^{k}(\tilde{u}^{k}.\eta)\\
p=0\quad\mbox{in}\quad S_1, & \tilde{u}.\eta=0\quad\mbox{in}\quad S_0
\end{array},\mbox{in}\quad \Gamma_{k},\quad k=2,\dots,m \right\}
\]




Perla et al. \cite{Perla} showed that operator $A_1$ is  skew-adjoint, i.e, $A_{1}^{*}=-A_1$, the same result was proved for $A_2$ . Using the Stone's theorem, we have proved that $A_1$ and  $A_2$ generate infinitesimally a group of strongly continuous  unit operators  $\{ U_{j}(t) \}_{t\in I\!\!R}$, in  $X_1$ and $X_2$, respectively.
Moreover, $U_{j}(t)w_j$ is strongly differentiable in relation to $t$ and for any  $w_j\in D(A_j)$,
\[
\frac{d}{dt}U_j (t)w_j = A_{j}U(t)w_j
\]
Now, we study some properties of the solutions (\ref{Par_Ecu_Ondas_Multilayer_1}), these are used in


\begin{lemma}{\label{Prop_A_j}}
Given $V_j=\left[  \mbox{Ker}(A^{*}_{j}) \right]^{\bot}$, considering the orthogonality in relation with the scalar product defined in $X_1$ and $X_2$, respectively. Then, the following results are valid:
	\begin{enumerate}
		\item $U_{j}(t)\left( V_j\cap D(A_j) \right)\subset V_j \cap D(A_j)$
		\item Fixing $t\in I\!\!R$, and $(u,p)\in V_1\cap D(A_1)$, then, in the distributions manner			  
			\begin{enumerate}
				\item $\mbox{curl}(u^{k})=0$, in $\Omega_k$, $k=0,1,\dots,m$
				\item $u\times\eta =0$ in $S_1$
				\item $u^{k-1}\times\eta =u^{k}\times\eta$ in $\Gamma_k$, $k=2,\dots,m$
			\end{enumerate}
			where $\times$ denoted a vectorial product of  $I\!\!R^{3}$ and $\eta(x)$  is an exterior normal vector $\Gamma_k$
		\item Fixing $t\in I\!\!R$,  $(v,q)\in V_2\cap D(A_2)$, then, in the distribution manner	
          \begin{enumerate}
				\item $\mbox{curl}(v^{k})=0$, in $\Omega_k$, $k=0,1,\dots,m$
				\item $v\times\eta =0$ in $\Gamma$
				\item $v^{k-1}\times\eta =v^{k}\times\eta$ in $\Gamma_k$, $k=2,\dots,m$
			\end{enumerate}
	\end{enumerate}
\end{lemma}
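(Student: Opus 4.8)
The plan is to exploit the skew-adjointness $A_j^* = -A_j$ established above, which gives two things at once: first, $\mathrm{Ker}(A_j^*) = \mathrm{Ker}(A_j)$, and second, the orthogonal decomposition identity $V_j = [\mathrm{Ker}(A_j^*)]^\perp = \overline{\mathrm{Range}(A_j)}$, valid for any closed densely defined operator. Thus every element of $V_j$ is an $X_j$-limit of elements of the form $A_j(u_n,p_n)$ with $(u_n,p_n)\in Z_j$, and all the claimed properties will be read off from the explicit expression for $A_j$ and then transported to the limit.

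For item 1, I would first note that $U_j(t)$ is unitary on $X_j$ by Stone's theorem, and that $\mathrm{Ker}(A_j)$ is invariant under the group; indeed if $A_j w = 0$ then $t\mapsto w$ solves $\dot y = A_j y$ with $y(0)=w$, so by uniqueness $U_j(t)w = w$. A unitary operator that fixes a closed subspace setwise (here $U_j(t)$ even acts as the identity on $\mathrm{Ker}(A_j)$) also fixes its orthogonal complement setwise, hence $U_j(t)V_j = V_j$. Combining this with the standard invariance $U_j(t)D(A_j)\subseteq D(A_j)$ yields $U_j(t)(V_j\cap D(A_j))\subseteq V_j\cap D(A_j)$.

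For item 2, I would work first on the dense set $\mathrm{Range}(A_1)$. If $(\phi,\psi)=A_1(u,p)=(-\alpha\nabla p,\,-\beta\,\mathrm{div}(u))$ with $(u,p)\in Z_1$, then on each $\Omega_k$ the velocity component is $\phi^k=-\alpha^k\nabla p^k$, a gradient, so $\mathrm{curl}(\phi^k)=0$. Moreover the data defining $Z_1$ translate directly into tangential conditions on $\phi$: the condition $p=0$ on $S_1$ forces the surface gradient of $p$ to vanish there, so $\nabla p\times\eta=0$ and hence $\phi\times\eta=0$ on $S_1$; and the interface relation $\alpha^{k-1}p^{k-1}=\alpha^k p^k$ on $\Gamma_k$, differentiated tangentially (the $\alpha^k$ being constant on each layer), gives $\alpha^{k-1}\nabla p^{k-1}\times\eta=\alpha^k\nabla p^k\times\eta$, i.e. $\phi^{k-1}\times\eta=\phi^k\times\eta$ on $\Gamma_k$. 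Thus (a), (b), (c) hold for every element of $\mathrm{Range}(A_1)$.

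The remaining and most delicate step is to pass these identities to an arbitrary $(u,p)\in V_1$, approximated in $X_1$ by $(\phi_n,\psi_n)\in\mathrm{Range}(A_1)$. The curl-free property survives immediately because $\mathrm{curl}$ is continuous from $L^2$ into $H^{-1}$ and each $\mathrm{curl}(\phi_n^k)=0$, giving (a). The genuine obstacle is that the tangential-trace conditions (b), (c) are surface identities that do not in general survive $L^2$ limits. The point that rescues the argument is precisely (a): since $\mathrm{curl}(\phi_n^k)=0=\mathrm{curl}(u^k)$, the $L^2$ convergence $\phi_n^k\to u^k$ automatically upgrades to convergence in $H(\mathrm{curl},\Omega_k)$, whose norm adds only the identically vanishing curl terms. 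The tangential-trace map $w\mapsto w\times\eta$ is continuous from $H(\mathrm{curl})$ into $H^{-1/2}$ (the tangential analogue of Lemma \ref{Lema_Aux_Trans}), so the identities $\phi_n\times\eta=0$ on $S_1$ and $\phi_n^{k-1}\times\eta=\phi_n^k\times\eta$ on $\Gamma_k$ pass to the limit, establishing (b) and (c). Item 3 is proved verbatim with $(v,q)$, $\gamma$, $\tau$ in place of $(u,p)$, $\alpha$, $\beta$, the only change being that $Z_2$ imposes $q=0$ on all of $\partial\Omega=S_0\cup S_1$, which yields $v\times\eta=0$ on the whole boundary $\Gamma$ rather than only on $S_1$.
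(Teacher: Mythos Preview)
Your argument is correct, but it follows a genuinely different route from the paper's. For item~1 the two are close: where you invoke the abstract fact that a unitary group fixes $\mathrm{Ker}(A_j)$ pointwise and hence its orthogonal complement setwise, the paper simply differentiates $t\mapsto\langle U_1(t)(u,p),w\rangle_{X_1}$ for $w\in\mathrm{Ker}(A_1^*)$, observes the derivative vanishes, and evaluates at $t=0$. These are the same computation dressed differently.

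The real divergence is in items~2 and~3. You use the identity $V_1=\overline{\mathrm{Range}(A_1)}$ and argue on the dense range, where the velocity component is literally $-\alpha\nabla p$ and the claimed properties are transparent, then push to the closure via the $H(\mathrm{curl})$ tangential-trace theorem. The paper instead stays on the dual side of the same orthogonal decomposition: it exhibits explicit elements of $\mathrm{Ker}(A_1^*)$ of the form $(\beta^{-1}\mathrm{curl}\,v,\,0)$ with $v\in[H^2(\Omega)]^3$, $v=0$ on $S_0$, and tests $(u,p)\in V_1$ directly against them. The integration-by-parts identity
\[
\int_{\Omega_k}\mathrm{curl}(u)\cdot v\,dx=\int_{\Omega_k}u\cdot\mathrm{curl}(v)\,dx-\int_{\partial\Omega_k}v\cdot(u\times\eta)\,d\Gamma
\]
then yields $\mathrm{curl}(u^k)=0$, $u\times\eta=0$ on $S_1$, and the interface matching, by successively localising the support of $v$.

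What each approach buys: the paper's testing argument is more self-contained, relying only on the classical curl integration-by-parts formula rather than the full $H(\mathrm{curl})$ trace machinery you invoke for the limit step. Your approach is cleaner structurally, makes the role of the gradient form explicit, and in fact delivers the conclusions for every element of $V_1$, not only those in $V_1\cap D(A_1)$; it also generalises more readily to settings where convenient kernel elements are harder to write down.
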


\begin{proof}
The proof of  $A_1$ , $A_2$ are similar. we make the proof for the first one. Given $(u,p)\in V_1\cap D(A_1)$, then $(u,p)\in V_1$ and  $(u,p)\in D(A_1)$. As  consequence of semigroup theory, we know that  $U(t)(u,p)\in D(A_1)$, $\forall t\in I\!\!R$, Now we need to prove that  $U(t)(u,p)\in V_1$. 
See that,  $\mbox{Ker}(A^{*}_{1})\neq \emptyset$ has elements of the form $(\beta^{-1}\mbox{Curl}(v),0)$ where $v\in \left[ H^{2}(\Omega) \right]^{3},\quad v=0$ in $S_0$
Given $w=(w_1,w_2)\in\mbox{Ker}(A_{1}^{*})$, then $A_{1}^{*}(w_1,w_2)=0$. And,
\[
\frac{d}{dt}\left( U_1(t)(u,p),(w_1,w_2) \right)=\left< A_1U_1(t)(v,p),(w_1,w_2) \right>_{X_1} = \left<  U_1(t)(u,p),A^{*}_{1}(w_1,w_2) \right>=0
\]
we have that,
\[U_1(t)(u,p),(w_1,w_2)=\mbox{C}, \quad\mbox{C=constant}\quad\forall t\in I\!\!R.\]
In particular, for $t=0$, $\left<  (u,p),(w_1,w_2) \right>_{X_1}=\mbox{C}$, and , $(u,p)\in \left[  \mbox{Ker}(A^{*}_{1}) \right]^{\perp}$ and $(w_1,w_2)\in \mbox{Ker}(A^{*}_{1})$, this implied that $C=0$.
In the same form, $\left<  U_1(t)(u,p),(w_1,w_2) \right>=0, \quad\forall t\in I\!\!R$. Given $U_{1}(t)(u,p)\in \left[  \mbox{Ker}(A^{*}_{1}) \right]^{\perp}=V_1$ and the item $1.)$ was proved.
Now, we prove the first item $2)$. Given  $v\in \left[ H^{2}(\Omega) \right]^{3}$ with support in $\Omega_k$ and considering the elements $\left(  \beta^{-1}\mbox{Curl}(v),0  \right)\in \mbox{Ker}(A^{*}_{1})$. Then, for all $(u,p)\in V_1\cap D(A_1)$, we have;
\[
0=\left< (u,p), \left(  \beta^{-1}\mbox{Curl}(v),0  \right) \right>_{X_1}=\sum^{m}_{k=0}\int_{\Omega_k}u{\Large\textbf{.}}\mbox{Curl}(v)dx
\]
because the support of $v$ is in  $\Omega_k$. The same form, $\mbox{Curl}(v)=0$ in  $\Omega_k$, $k=0,1,\dots,m$ in the distributions way. 
For proving the  item $b)$ of $2)$ We use the identities such as,
\begin{equation}{\label{Int_Part_Clas}}
\int_{\Omega}\mbox{Curl}(u){\Large\textbf{.}}v dx=\int_{\Omega}u{\Large\textbf{.}}\mbox{Curl}(v) dx - \int_{\partial\Omega}v{\Large\textbf{.}}(u\times\eta)d\Gamma
\end{equation}
Given $v\in \left[ H^{2}(\Omega) \right]^{3}$ and $\left(  \beta^{-1}\mbox{Curl}(v),0  \right)\in\mbox{Ker}(A^{*}_{1})$, as, $v=0$ in  $\bigcup^{j=1}_{m}\bar{\Omega_j}$. Using (\ref{Int_Part_Clas}) we have
\begin{eqnarray}{\label{Int_Par_Aux}}
0 &=& \left< (u,p), \left(  \beta^{-1}\mbox{Curl}(v),0  \right) \right>_{X_1} = \sum^{m}_{k=0}\int_{\Omega_k}u^{k}{\Large\textbf{.}}\mbox{Curl}(v^{k})dx\nonumber \\
&=& \sum^{m}_{k=0}\int_{\Omega_k}\mbox{Curl}(u^{k})\Large\textbf{.}v^{k} dx + \sum^{m}_{k=0}\int_{\partial\Omega_k}v^{k}\Large\textbf{.}(u^{k}\times\eta) d\Gamma_k \\
0 &=& \sum^{m}_{k=0}\int_{\partial\Omega_k}v^{k}\Large\textbf{.}(u^{k}\times\eta) d\Gamma_k = \int_{S_1}v\Large\textbf{.}(u\times\eta)dS_1\nonumber
\end{eqnarray}
The last prove is for the item $c)$ of $2)$. Given  $v\in \left[ H^{2}(\Omega) \right]^{3}$ and $\left(  \beta^{-1}\mbox{Curl}(v),0  \right)\in\mbox{Ker}(A^{*}_{1})$, using the identities (\ref{Int_Par_Aux}), we have
\begin{eqnarray}{\label{Int_Par_Aux_1}}
0 &=& \sum^{m}_{k=0}\int_{\Omega_k}\mbox{Curl}(u^{k}).v^{k} dx + \sum^{m}_{k=0}\int_{\partial\Omega_k}v^{k}\Large\textbf{.}(u^{k}\times\eta) d\Gamma_k \nonumber \\
0 &=& \sum^{m}_{k=0}\int_{\Omega_k}v^{k}\Large\textbf{.}(u^{k}\times\eta) d\Gamma_k
\end{eqnarray}
Now, we chose $v$ such that $v=0$ in  $S_0$ and $v=0$ in $\begin{array}{c}
m\\
\bigcup\\
j=1\\
j\neq k
\end{array}
\Gamma_j$, by (\ref{Int_Par_Aux_1}) we have that
\[0=\int_{\Gamma_k} v\Large\textbf{.}\left\{  u^{k-1}\times\eta-u^{k}\times\eta    \right\}d\Gamma_k \]
that is the expected result.\\
The item $3)$ is proved in the same way.
\end{proof}

The theorem ~\ref{Sol_Fortes_P1} has a summary  of the results .
\begin{theorem}{\label{Sol_Fortes_P1}}
Given $V_j$ an orthogonal complement of the subset $\mbox{Ker}(A^{*}_{j})$, $j=1,2$, in $X_j$. Consider the problems (\ref{Par_Ecu_Ondas_Multilayer_1}), (\ref{Par_Ecu_Ondas_Multilayer_2}), (\ref{Par_Ecu_Ondas_ST1}), (\ref{Par_Ecu_Ondas_ST2}) and the initial conditions  $(u_0,p_0)\in V_1\cap D(A_1)$ and $(v_0,q_0)\in V_2\cap D(A_2)$. Then $(u,p)=U_{1}(t)(u_0,p_0)$ and $(v,q)=U_{2}(t)(v_0,q_0)$ are the uniqueness solutions, respectively. That is,
\[
\begin{array}{cc}
(u,p)\in & C(I\!\!R; V_1\cap D(A_1))\cap C(I\!\!R, X_1)\\
(v,q)\in & C(I\!\!R; V_2\cap D(A_2))\cap C(I\!\!R, X_2)
\end{array}
\]
In addition, these solutions satisfies the properties in the lemma~\ref{Prop_A_j}.
\end{theorem}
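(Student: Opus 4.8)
The plan is to recognize this theorem as essentially a packaging of the semigroup machinery already assembled in the excerpt, so the proof is largely a matter of invoking the correct abstract results in sequence rather than performing new estimates. Since Perla et al. have shown (as quoted) that $A_1$ and $A_2$ are skew-adjoint, Stone's theorem gives that each $A_j$ generates a strongly continuous group of unitary operators $\{U_j(t)\}_{t\in\mathbb{R}}$ on $X_j$. I would begin by recalling the standard consequence of semigroup theory: for initial data $w_0 \in D(A_j)$, the orbit $U_j(t)w_0$ is the unique solution of the abstract Cauchy problem $\frac{d}{dt}w = A_j w$, $w(0)=w_0$, and it satisfies $U_j(t)w_0 \in D(A_j)$ for all $t$, with $U_j(t)w_0 \in C(\mathbb{R};D(A_j)) \cap C^1(\mathbb{R};X_j)$. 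Here $D(A_j)$ is understood to carry the graph norm, under which it is a Banach (indeed Hilbert) space, so that continuity into $D(A_j)$ is meaningful.

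Next I would show that the abstract solution $U_j(t)w_0$ genuinely solves the PDE systems \eqref{Par_Ecu_Ondas_Multilayer_1}, \eqref{Par_Ecu_Ondas_Multilayer_2} together with the transmission conditions \eqref{Cond_Multilayer_1}, \eqref{Cond_Multilayer_2} and the boundary conditions. This is the translation step: membership in $D(A_j)=Z_j$ encodes precisely the transmission and boundary conditions built into the definition of $Z_j$, and the identity $\frac{d}{dt}U_j(t)w_0 = A_j U_j(t)w_0$ unwinds, component by component, into the two evolution equations, since $A_1(u,p) = (-\alpha\nabla p, -\beta\,\mbox{div}(u))$ and similarly for $A_2$. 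Thus the first and second coordinates of $\frac{d}{dt}(u,p) = A_1(u,p)$ are exactly $\frac{\partial u}{\partial t} = -\alpha\nabla p$ and $\frac{\partial p}{\partial t} = -\beta\,\mbox{div}(u)$, as required. Uniqueness follows immediately from the uniqueness of solutions to the abstract Cauchy problem generated by a group (or directly from invertibility of $U_j(t)$).

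The one genuinely substantive point — and the step I expect to be the main obstacle — is the invariance of $V_j \cap D(A_j)$ under the flow, i.e. that when the initial data lie in $V_j \cap D(A_j)$ the solution stays there, so that the claimed regularity $C(\mathbb{R};V_j\cap D(A_j))$ holds and the structural properties of Lemma~\ref{Prop_A_j} are inherited at every time. Fortunately this is exactly the content of item~1 of Lemma~\ref{Prop_A_j}, namely $U_j(t)(V_j \cap D(A_j)) \subset V_j \cap D(A_j)$, which I may assume. I would therefore combine: (i) $U_j(t)$ preserves $D(A_j)$ (semigroup theory); (ii) $U_j(t)$ preserves $V_j$ and hence $V_j \cap D(A_j)$ (Lemma~\ref{Prop_A_j}); and (iii) strong continuity of the group in both $X_j$ and the graph norm, to conclude the two regularity statements. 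Finally, since $(u(t),p(t)) = U_1(t)(u_0,p_0) \in V_1 \cap D(A_1)$ for each $t$, items~2(a)--2(c) of Lemma~\ref{Prop_A_j} apply to the solution at every instant, which gives the last assertion of the theorem; the argument for $(v,q)$ and $A_2$ is identical.

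Because essentially every ingredient has been pre-established, I anticipate no hard estimate here; the care required is purely bookkeeping — verifying that the graph-norm continuity into $D(A_j)$, rather than mere $X_j$-continuity, is what the statement $C(\mathbb{R};V_j\cap D(A_j))$ asserts, and that intersecting with the closed invariant subspace $V_j$ does not disturb continuity.
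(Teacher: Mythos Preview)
Your proposal is correct and matches the paper's approach exactly: the paper provides no separate proof of this theorem, stating only that it ``has a summary of the results,'' i.e., it packages Stone's theorem (applied just before Lemma~\ref{Prop_A_j}) together with the invariance statement of Lemma~\ref{Prop_A_j}, item~1. Your outline of invoking the unitary group, the standard regularity for data in $D(A_j)$, and the invariance of $V_j\cap D(A_j)$ under $U_j(t)$ is precisely this assembly.
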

Before to show the inequality of observability, we prove some important properties.\\
The energy associate to the systems (\ref{Par_Ecu_Ondas_Multilayer_1}) and  (\ref{Par_Ecu_Ondas_Multilayer_2}), with null boundary conditions, are given by:
\[
E_{1}(t)=\frac{1}{2}\sum_{k=0}^{m}\int_{\Omega_k}\left\{ \beta_k\mid u^k \mid^2 + \alpha^k (p^k)^{2}  \right\} dx
\]
and 
\[
E_{2}(t)=\frac{1}{2}\sum_{k=0}^{m}\int_{\Omega_k}\left\{ \tau_k\mid v^k \mid^2 + \gamma^k (q^k)^{2}  \right\} dx
\]
respectively. To prove that these are not dependent of the time  $t$, In fact, we multiplied the first equation of (\ref{Par_Ecu_Ondas_Multilayer_1}) by  $\beta^k u^k$ and integrating in $\Omega_k$ and adding in $k=0,1,\dots,m$, we have
\begin{equation}{\label{Ener_1}}
\frac{1}{2}\frac{d}{dt}\sum_{k=0}^{m}\int_{\Omega_k}\beta^k\mid u^k \mid^2 dx-\sum_{k=0}^{m}\int_{\Omega_k}\beta^k\alpha^k p^k\mbox{div}(u^k)dx + \sum_{k=0}^{m}\int_{\partial\Omega_k}\beta^k\alpha^k p^k (u^k{\Large\textbf{.}}\eta)dx.
\end{equation}
Multiplying the second equation of (\ref{Par_Ecu_Ondas_Multilayer_1}) by  $\alpha^k p^k$, after integrating in $\Omega_k$ and adding in  $k$, we have that 
\begin{equation}{\label{Ener_2}}
\frac{1}{2}\frac{d}{dt}\sum_{k=0}^{m}\int_{\Omega_k}\alpha^k(p^k)^2 dx + \sum_{k=0}^{m}\int_{\Omega_k}\alpha^k \beta^k p^k\mbox{div}(u^k) dx =0
\end{equation}
adding (\ref{Ener_1}) and (\ref{Ener_2}), we have that 
\[
\frac{1}{2}\frac{d}{dt}\sum_{k=0}^{m}\int_{\Omega_k}\left\{\beta^k\mid u^k \mid^2 +\alpha^k(p^k)^k \right\} dx + \sum_{k=0}^{m}\int_{\partial\Omega_k}\beta^k\alpha^k p^k (u^k\Large\textbf{.}\eta)dx=0
\]
moreover ,
\[  
\begin{aligned}
\sum_{k=0}^{m}\int_{\partial\Omega_k}\beta^k\alpha^k p^k (u^k\Large\textbf{.}\eta)dx &=\int_{S_1}\alpha\beta p (u\Large\textbf{.}\eta)dS_1 + \sum_{k=1}^{m}\int_{\Gamma_k}\left\{  \alpha^{k-1}\beta^{k-1}p^{k-1}(u^{k-1}\Large\textbf{.}\eta)- \right. \\
& \left. \alpha^{k}\beta^{k}p^{k}(u^{k}\Large\textbf{.}\eta)  \right\}d\Gamma_k + 
 \int_{S_0}\alpha\beta p (u.\eta)dS_0
\end{aligned}
\]
Using the contour and interface conditions (\ref{Cond_Multilayer_1}), we have that  $\sum_{k=0}^{m}\int_{\partial\Omega_k}\alpha^k\beta^k p^k (u^k\Large\textbf{.}\eta)d\Gamma_k=0$. Follows the affirmation.  The case $E_2(t)$ is similar .

\section{Inequality of observability}





In this section we show the inequality of observability. This inequality satisfies the systems (\ref{Par_Ecu_Ondas_Multilayer_1}) and (\ref{Par_Ecu_Ondas_Multilayer_2}) simultaneously.  Using the multiplier's theory (see Komornik\cite{Komornik}), we make the proof. The multiplier was modified to get a good  estimates  in the boundary. These multiplier were used in several works. The invariant of the systems (\ref{Par_Ecu_Ondas_ST1}) and  (\ref{Par_Ecu_Ondas_ST2}), in relations to dilatations groups in all variables, see \cite{Kapitonov} and \cite{Perla}.
Given $h:C(\Omega)\cap C^{1}(\bar{\Omega})\longrightarrow I\!\!R$ an auxiliary function, it will be chosen in the next steps; and, given  $(u,p)\in V_1\cap D(A_1)$ a solution of the system (\ref{Par_Ecu_Ondas_ST1}). Considering the multiplier given by:
\[\left\{
\begin{aligned}
M_1 &= 2\left( \alpha t p -u\Large\textbf{.}\nabla h + \alpha\int_{0}^{t}p(x,s) ds \right) \\
M_2 &= 2\left( \beta t u - p\nabla h \right)  \\
M_3 &= 2\beta u  
\end{aligned}\right.
\]
and $(u,p)$ solution of (\ref{Par_Ecu_Ondas_ST1}),  we have the identities
\[
0=M_1\left\{  p_t + \beta\mbox{div}u  \right\}+M_2\Large\textbf{.}\left\{  u_t + \alpha\nabla p  \right\}+ M_3\Large\textbf{.}\left\{  \int_{0}^{t} (u_s +\alpha\nabla p)ds  \right\}
\]



The expression above, we make rewrite as
\begin{equation}{\label{Eq_Control}}
0=\frac{\partial A}{\partial t}-\mbox{div}(\vec{B})-J
\end{equation}
where
\[
\begin{aligned}
A &=t\left[  \beta\mid u \mid^{2}+\alpha p^2 \right]-2p(u\Large\textbf{.}\nabla h) + 2\alpha p\int_{0}^{t}p(x,s)ds -2\beta u(x,0)\Large\textbf{.}\int_{0}^{t}u(x,s)ds \\
\vec{B}  &= -2\alpha\beta t p u +\alpha p^2 \nabla h -\beta \mid u \mid^{2}\nabla h + 2\beta(u\Large\textbf{.}\nabla h)u -2\alpha\beta\left( \int_{0}^{t} p(x,s)ds \right)u\\
J &= \beta(\Delta -1)\mid u \mid^2 -2\beta\sum_{i,j=1}^{3}\frac{\partial^2h}{\partial x_i\partial x_j}u_i u_j -\alpha(\Delta h -3)p^2
\end{aligned}
\]


\begin{remark}
We have considered  $h(x)=\frac{1}{2}\mid x-x_0 \mid^2$ for some  $x_0\in I\!\!R$ fixed, then $J=0$. In this case (\ref{Eq_Control}) represents a conservation law. Integrating (\ref{Eq_Control}) and $\Omega_k$ , we observe that: in the expression  $B$ need to fix the $\frac{\partial h}{\partial\eta}$ to get good estimations; then,  we chose  $h(x)$ as a small perturbation of $\frac{1}{2}\mid x-x_0 \mid^2$  for some $x_0\in I\!\!R^3$.
\end{remark}

Integrating the identity (\ref{Eq_Control}) in  $\Omega_k \times (0,s)$ and adding in $k$, we have 
\begin{equation}{\label{Partes_1}}
0=\sum_{k=0}^{m}\int_{\Omega_k}\left[ A^k(x,s)- A^k(x,0)  \right]dx - \sum_{k=0}^{m}\int_{0}^{s}\int_{\partial\Omega_k}\vec{B}^{k}{\Large\textbf{.}}\eta d\Gamma - \sum_{k=0}^{m}\int_{\Omega_k}\int_{0}^{s}J^{k} dt dx
\end{equation}
replacing the expression  $A$ in (\ref{Partes_1}), we have that
\[
\begin{aligned}
0 &= \sum_{k=0}^{m}\int_{\Omega_k}\left\{ s\left[ \beta^k \mid u^k \mid^2 + \alpha^k(p^k)^2  \right] - 2p^k(x,s)(u^k\Large\textbf{.}\nabla h) +2\alpha^k p^k\int_{0}^{s}p^{k}(x,\tau)d\tau\right.\\ 
&- \left. 2\beta^k u_{0}^{k}(x)\Large\textbf{.}\int_{0}^{s}u^{k}(x,r)dr +2p_{0}^{k}(u_{0}^{k}\Large\textbf{.}\nabla h) \right\}dx - \sum_{k=0}^{m}\int_{0}^{s}\int_{\partial\Omega_k}\vec{B}^k\Large\textbf{.}\eta d\Gamma_k-\sum_{k=0}^{m}\int_{\Omega_k}\int_{0}^{s}J^{k}dtdx
\end{aligned}
\]
thus,
\begin{equation}{\label{Estim_Varios}}
\begin{aligned}
s\sum_{k=0}^{m}\int_{\Omega_k}\left[ \beta^k \mid u^k \mid^2 + \alpha^k(p^k)^2  \right]  &= 2\sum_{k=0}^{m}\int_{\Omega_k}2p^k(x,s)(u^k{\Large\textbf{.}}\nabla h)dx - 2\sum_{k=0}^{m}\int_{\Omega_k}\alpha^k p^k\int_{0}^{s}p^{k}(x,\tau)d\tau\\ 
&+ 2\sum_{k=0}^{m}\int_{\Omega_k}\beta^k u_{0}^{k}(x){\Large\textbf{.}}\int_{0}^{s}u^{k}(x,r)drdx -2\sum_{k=0}^{m}\int_{\Omega_k}p_{0}^{k}(u_{0}^{k}{\Large\textbf{.}}\nabla h)dx \\
&+ \sum_{k=0}^{m}\int_{0}^{s}\int_{\partial\Omega_k}\vec{B}^k{\Large\textbf{.}}\eta d\Gamma_k+ \sum_{k=0}^{m}\int_{\Omega_k}\int_{0}^{s}J^{k}dtdx
\end{aligned}
\end{equation}
where $p^{k}_{0}=p^{k}(x,0)$, $u^{k}_{0}=u^{k}(x,0)$. 
The proof of the main result is to get the inequality of the observability; we obtain this estimation using the right side of (\ref{Estim_Varios}). Making good presentation of the proof, we show many lemmas

\begin{lemma} 
Given $\{ u, p\}$ regular solution for the problem (\ref{Par_Ecu_Ondas_Multilayer_1})-(\ref{Cond_Multilayer_1}), this was given by the theorem \ref{Sol_Fortes_P1}. Then 
\[
\sum_{k=0}^{m}\int_{\Omega_k}2p^k(x,s)(u^k.\nabla h)dx\leq C_1\sum_{k=0}^{m}\left\{ \beta^k\mid u^k \mid^2+\alpha^k(p^k)^2 \right\}dx
\]
where $C_1=3\max_{k=0,1,\dots,m}\left\{ (\alpha^k)^{-1}, (\beta^k)^{-1} \right\}\max_{x\in\Omega}\mid \nabla h \mid$
\end{lemma}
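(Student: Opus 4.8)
The plan is to prove the bound layerwise: since both sides are sums over $k=0,1,\dots,m$ of integrals over the $\Omega_k$, it suffices to establish the pointwise estimate $2p^k(u^k\cdot\nabla h)\le C_1\{\beta^k|u^k|^2+\alpha^k(p^k)^2\}$ for almost every $x\in\Omega_k$, and then integrate over $\Omega_k$ and add over $k$. This is a purely algebraic (pointwise) inequality; I would not need the PDE (\ref{Par_Ecu_Ondas_Multilayer_1}), the interface conditions (\ref{Cond_Multilayer_1}), nor any property from Theorem~\ref{Sol_Fortes_P1} beyond the $L^2$-integrability of $u^k$ and $p^k$ that makes both sides finite. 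Note also that $M:=\max_{x\in\Omega}|\nabla h|$ is finite because $h\in C^1(\bar{\Omega})$ and $\bar{\Omega}$ is compact.

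First I would control the inner product by Cauchy--Schwarz in $I\!\!R^3$, $|u^k\cdot\nabla h|\le |u^k|\,|\nabla h|\le M\,|u^k|$, which gives $2p^k(u^k\cdot\nabla h)\le 2M\,|p^k|\,|u^k|$. Next I would split the product $|p^k|\,|u^k|$ by Young's inequality, $2|p^k|\,|u^k|\le |u^k|^2+(p^k)^2$. Finally, setting $c:=\max_{k=0,\dots,m}\{(\alpha^k)^{-1},(\beta^k)^{-1}\}$, I would reinsert the coefficients via $|u^k|^2=(\beta^k)^{-1}\beta^k|u^k|^2\le c\,\beta^k|u^k|^2$ and $(p^k)^2=(\alpha^k)^{-1}\alpha^k(p^k)^2\le c\,\alpha^k(p^k)^2$. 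Combining these yields $2p^k(u^k\cdot\nabla h)\le Mc\,\{\beta^k|u^k|^2+\alpha^k(p^k)^2\}$, and integrating over $\Omega_k$ and summing over $k$ gives exactly the asserted inequality; the factor $3$ appearing in the stated $C_1=3c\,M$ simply absorbs any slack from a cruder estimation of $u^k\cdot\nabla h$ (e.g.\ bounding it componentwise by the three partial derivatives of $h$) and is harmless for the argument.

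The only point requiring a little care — rather than a genuine obstacle — is the bookkeeping of the constant: the comparison between the Euclidean quantities $|u^k|^2,(p^k)^2$ and the energy-weighted quantities $\beta^k|u^k|^2,\alpha^k(p^k)^2$ must use the \emph{reciprocals} of the coefficients, which is precisely what enters $C_1$, and the maximum $c$ must be taken over all layers $k=0,\dots,m$ at once so that a single constant serves the entire sum. There is no analytic difficulty whatsoever; the estimate is an immediate consequence of the Cauchy--Schwarz and Young inequalities applied inside each subdomain $\Omega_k$.
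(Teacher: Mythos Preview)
Your argument is correct and is essentially the same as the paper's: both use Cauchy--Schwarz/H\"older followed by Young's inequality and then reinsert the weights $\alpha^k,\beta^k$ via their reciprocals. The only cosmetic difference is that the paper expands $u^k\cdot\nabla h=\sum_{i=1}^{3}u_i^k\,\partial_{x_i}h$ componentwise and applies H\"older in $L^2(\Omega_k)$ to each of the three terms, which produces $3\int(p^k)^2+\int|u^k|^2$ and hence the factor $3$ in $C_1$; your vector Cauchy--Schwarz step avoids that factor, so you in fact obtain the sharper constant $C_1'=\max_k\{(\alpha^k)^{-1},(\beta^k)^{-1}\}\max_{\Omega}|\nabla h|\le C_1$, which is perfectly fine for the stated inequality.
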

\begin{proof}
Using the Holder inequality in the first term in the right-hand side of (\ref{Estim_Varios}), we have that
\begin{equation}{\label{Primeiro_Termo}}
	\begin{aligned}
		2\sum_{k=0}^{m}\int_{\Omega_k}p^k(x,s)(u^k.\nabla h)dx &=2\sum_{k=0}^{m}\sum_{i=1}^{3}\int_{\Omega_k}p^k u_{i}^{k}\frac{\partial h}{\partial x_i}dx \leq 2\max_{x\in\Omega}			\mid \nabla h \mid\sum_{k=0}^{m}\sum_{i=1}^{3}\int_{\Omega_k}p^k u_{i}^{k} dx \\
		& \leq 2\max_{x\in\Omega}\mid \nabla h \mid \sum_{k=0}^{m}\sum_{i=1}^{3}\left(\int_{\Omega_k}(p^k)^2 \right)^{1/2}\left(\int_{\Omega_k}(u_i^k)^2 \right)^{1/2}\\	         & \leq \max_{x\in\Omega}\mid \nabla h \mid \sum_{k=0}^{m}\left\{ 3\int_{\Omega_k}(p^k)^2 dx +\int_{\Omega_k}\mid u^k \mid^2 dx \right\}\\	         
		& \leq 3\max_{k=0,1,\dots,m}\left\{ (\alpha^k)^{-1}, (\beta^k)^{-1} \right\}\max_{x\in\Omega}\mid \nabla h \mid \sum_{k=0}^{m}\left\{ \beta^k\mid u^k \mid^2+\alpha^k 				  (p^k)^2 \right\}dx \\
		&= C_1\sum_{k=0}^{m}\left\{ \beta^k\mid u^k \mid^2+\alpha^k(p^k)^2 \right\}dx
	\end{aligned}
\end{equation}
where, $$C_1=3\max_{k=0,1,\dots,m}\left\{ (\alpha^k)^{-1}, (\beta^k)^{-1} \right\}\max_{x\in\Omega}\mid \nabla h \mid$$
\end{proof}





The second term in the right-hand side of (\ref{Estim_Varios}), $2\sum_{k=0}^{m}\int_{\Omega_k}\alpha^k p^k\int_{0}^{s}p^{k}(x,\tau)d\tau$, we may write as 
\begin{equation}{\label{Segundo_Termo}}
	\begin{aligned}
		2\sum_{k=0}^{m}\int_{\Omega_k}\alpha^k p^k\int_{0}^{s}p^{k}(x,\tau)d\tau = -\frac{\partial}{\partial s}\sum_{k=0}^{m}\int_{\Omega_k}\alpha^{k}\left[ \int_{0}^{s}p^{k}(x,r)dr  \right]dx.
	\end{aligned}
\end{equation}
To estimate the fourth term in the right-hand side of (\ref{Estim_Varios}), $-2\sum_{k=0}^{m}\int_{\Omega_k}p_{0}^{k}(u_{0}^{k}{\Large\textbf{.}}\nabla h)dx$, we used the assumption  the independence of the energy with the time the estimation is (\ref{Primeiro_Termo}), Thus:
\begin{equation}{\label{Quarto_Termo}}
	\begin{aligned}
		-2\sum_{k=0}^{m}\int_{\Omega_k}p_{0}^{k}(u_{0}^{k}{\Large\textbf{.}}\nabla h)dx\leq C_1\sum_{k=0}^{m}			\int_{\Omega_k}\left[  \beta^k\mid u^k\mid^2 +\alpha^k(p^k)^2 \right]
		^2dx.
	\end{aligned}
\end{equation}
The fifth term of right-hand side of (\ref{Estim_Varios}), $\sum_{k=0}^{m}\int_{0}^{s}\int_{\partial\Omega_k}\vec{B}^k.\eta d\Gamma_k$, we need to make an analysis more carefully.

\begin{lemma}{\label{Lema_Interfaces}}
 Given $\{ u, p\}$ a regular solution of the problem (\ref{Par_Ecu_Ondas_ST1})-(\ref{Cond_Multilayer_1}) by  theorem \ref{Sol_Fortes_P1}. For $k=1,2,\dots,m$, we have the following identity
\[
\vec{B}^{k-1}{\Large\textbf{.}}\eta - \vec{B}^{k}{\Large\textbf{.}}\eta = -\frac{\partial h}{\partial\eta}\left\{ \frac{(\alpha^{k-1}-\alpha^k)}{\alpha^{k-1}}\alpha^{k}(p^k)^2 +(\beta^{k-1}-\beta^{k})\frac{\beta^{k}}{\beta^{k-1}}\mid u^k{\Large\textbf{.}}\eta\mid^2 + (\beta^{k-1}-\beta^{k})\mid u^k\times\eta  \mid^2 \right\}
\]
is validated in  $\Gamma_k$, $k=1,2,\dots,m$.
\end{lemma}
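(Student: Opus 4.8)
The plan is to expand $\vec{B}^k\cdot\eta$ term by term from the definition of $\vec{B}$ and then subtract the two sides across the interface, using the transmission conditions \eqref{Cond_Multilayer_1} together with the tangential continuity furnished by Lemma \ref{Prop_A_j}. Writing out $\vec{B}^k\cdot\eta$ and using $\nabla h\cdot\eta=\frac{\partial h}{\partial\eta}$, the five contributions are $-2\alpha^k\beta^k t\,p^k(u^k\cdot\eta)$, then $\alpha^k(p^k)^2\frac{\partial h}{\partial\eta}$, then $-\beta^k|u^k|^2\frac{\partial h}{\partial\eta}$, then $2\beta^k(u^k\cdot\nabla h)(u^k\cdot\eta)$, and finally $-2\alpha^k\beta^k\bigl(\int_0^t p^k\,ds\bigr)(u^k\cdot\eta)$.

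First I would dispose of the two time-dependent terms (the first and the last). Each contains the product $\alpha^k\beta^k p^k(u^k\cdot\eta)$, which is interface-invariant: since $\alpha^{k-1}p^{k-1}=\alpha^k p^k$ and $\beta^{k-1}(u^{k-1}\cdot\eta)=\beta^k(u^k\cdot\eta)$ on $\Gamma_k$, multiplying gives $\alpha^{k-1}\beta^{k-1}p^{k-1}(u^{k-1}\cdot\eta)=\alpha^k\beta^k p^k(u^k\cdot\eta)$, and the same holds with $\int_0^t p$ in place of $p$ because the continuity of $\alpha p$ passes through the time integral. Hence both contributions cancel in the difference $\vec{B}^{k-1}\cdot\eta-\vec{B}^k\cdot\eta$. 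For the pressure term, using $p^{k-1}=\frac{\alpha^k}{\alpha^{k-1}}p^k$ gives $\alpha^{k-1}(p^{k-1})^2-\alpha^k(p^k)^2=\alpha^k\frac{\alpha^k-\alpha^{k-1}}{\alpha^{k-1}}(p^k)^2$, which, after factoring out the global $-\frac{\partial h}{\partial\eta}$, is exactly the first term of the claimed identity.

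The velocity part is the delicate one. I would split $u$ and $\nabla h$ into normal and tangential components relative to $\eta$, writing $u=(u\cdot\eta)\eta+u_T$ and $\nabla h=\frac{\partial h}{\partial\eta}\eta+(\nabla h)_T$, and use the elementary identities $|u|^2=(u\cdot\eta)^2+|u\times\eta|^2$ and $(u\cdot\nabla h)(u\cdot\eta)=(u\cdot\eta)^2\frac{\partial h}{\partial\eta}+(u_T\cdot(\nabla h)_T)(u\cdot\eta)$. Combining $-\beta^k|u^k|^2\frac{\partial h}{\partial\eta}+2\beta^k(u^k\cdot\nabla h)(u^k\cdot\eta)$ then reduces the velocity contribution to $\vec{B}^k\cdot\eta$ to $\beta^k(u^k\cdot\eta)^2\frac{\partial h}{\partial\eta}-\beta^k|u^k\times\eta|^2\frac{\partial h}{\partial\eta}+2\beta^k(u_T^k\cdot(\nabla h)_T)(u^k\cdot\eta)$.

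The main obstacle, and the point where Lemma \ref{Prop_A_j} is essential, is the cross term $2\beta^k(u_T^k\cdot(\nabla h)_T)(u^k\cdot\eta)$: for it to cancel I need the full tangential vector $u_T$, not merely its norm, to be continuous across $\Gamma_k$. This follows from item 2(c) of Lemma \ref{Prop_A_j}, namely $u^{k-1}\times\eta=u^k\times\eta$, since on the tangent plane the map $u_T\mapsto u_T\times\eta$ is a quarter-turn rotation and therefore invertible, so equality of $u\times\eta$ forces $u_T^{k-1}=u_T^k$. With $u_T$ continuous and $\beta(u\cdot\eta)$ continuous, the cross terms obey $2\beta^{k-1}(u_T\cdot(\nabla h)_T)(u^{k-1}\cdot\eta)=2\beta^k(u_T\cdot(\nabla h)_T)(u^k\cdot\eta)$ and drop out. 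The remaining normal term, via $u^{k-1}\cdot\eta=\frac{\beta^k}{\beta^{k-1}}(u^k\cdot\eta)$, produces $(\beta^{k-1}-\beta^k)\frac{\beta^k}{\beta^{k-1}}|u^k\cdot\eta|^2$, and the tangential-norm term, via $|u^{k-1}\times\eta|=|u^k\times\eta|$, produces $(\beta^{k-1}-\beta^k)|u^k\times\eta|^2$. Assembling the three surviving contributions and factoring out $-\frac{\partial h}{\partial\eta}$ yields precisely the stated identity.
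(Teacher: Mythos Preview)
Your proof is correct and follows essentially the same route as the paper's: expand $\vec{B}\cdot\eta$, use the transmission conditions \eqref{Cond_Multilayer_1} to kill the two time-dependent terms and rewrite the pressure jump, and then decompose $u$ into normal and tangential parts via $|u|^2=(u\cdot\eta)^2+|u\times\eta|^2$. Your treatment is in fact cleaner on one point the paper leaves implicit, namely that the mixed term $2\beta(u_T\cdot(\nabla h)_T)(u\cdot\eta)$ cancels across $\Gamma_k$ precisely because Lemma~\ref{Prop_A_j}\,(2c) gives $u_T^{k-1}=u_T^k$ while $\beta(u\cdot\eta)$ is already continuous.
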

\begin{proof}
Using the boundary conditions (\ref{Cond_Multilayer_1}), we have that  $x\in S_1$ , then
\begin{equation}{\label{Quinto_Termo}}
	\begin{aligned}
		\vec{B}{\Large\textbf{.}}\eta &= -\beta\mid u\mid^2 \frac{\partial h}{\partial\eta}+2\beta(u{\Large\textbf{.}}\nabla h)			(u.\eta)\\
		&= -\beta\mid u\mid^2 \frac{\partial h}{\partial\eta}+2\beta\left\{ \mid u 							\mid^2\frac{\partial h}{\partial\eta} +(u\times\eta)(\nabla h\times u)\right\}\\
		&=-\beta\mid u\mid^2 \frac{\partial h}{\partial\eta}+2\beta\mid u \mid^2\frac{\partial 				h}{\partial\eta}\\
		&= \beta\mid u\mid^2 \frac{\partial h}{\partial\eta}.
	\end{aligned}
\end{equation}
and  $x\in S_0$, then 
\[
\vec{B}{\Large\textbf{.}}\eta= \alpha p^2\frac{\partial h}{\partial\eta}-\beta\mid u \mid^2 \frac{\partial h}{\partial\eta}.
\]
Using the interface conditions (\ref{Cond_Multilayer_1}), for $x\in\Gamma_k$ we have the following identity 
\begin{equation}{\label{Aux_interf_B}}
	\begin{aligned}
		\vec{B}^{k-1}{\Large\textbf{.}}\eta - \vec{B}^{k}{\Large\textbf{.}}\eta &= -2\alpha^{k-1}\beta^{k-1}+p^{k-1}(u^{k-1}.				\eta)+\alpha^{k-1}(p^{k-1})^2 \frac{\partial h}{\partial\eta}-\beta^{k-1}\mid u^{k-1}\mid^2 \frac{\partial h}{\partial\eta}+\\
		&+ 2\beta^{k-1}(u^{k-1}.\nabla h)(u^{k-1}{\Large\textbf{.}}\eta)-2\alpha^{k-1}\beta^{k-1}\left( \int_{0}^{t}p^{k-1}(x,s)ds \right)(u^{k-1}{\Large\textbf{.}}\eta)+ \\
		&- \alpha^{k}(p^{k})^2 \frac{\partial h}{\partial\eta} +\beta^{k}\mid u^k\mid^2\frac{\partial h}{\partial\eta}-2\beta^k(u^k{\Large\textbf{.}}\nabla h)(u^k{\Large\textbf{.}}\eta)+2\alpha^k\beta^k \left( \int_{0}^{t}p^k(x,s)ds \right)(u^k{\Large\textbf{.}}\eta)\\	
		&= \alpha^{k-1}(p^{k-1})^2 \frac{\partial h}				{\partial\eta}-\beta^{k-1}\mid u^{k-1}\mid^2 \frac{\partial h}{\partial\eta}+2\beta^{k-1}(u^{k-1}{\Large\textbf{.}}\nabla h)(u^{k-1}{\Large\textbf{.}}\eta) -\\
		& \alpha^{k}(p^{k})^2 \frac{\partial h}{\partial\eta}+\beta^k \mid u^k \mid^2 \frac{\partial h}{\partial\eta}-2\beta^k(u^k{\Large\textbf{.}}\nabla h)(u^k{\Large\textbf{.}}\eta)
	\end{aligned}
\end{equation}
else,
\begin{equation}{\label{Aux_interf}}
	\begin{aligned}
		\alpha^{k-1}(p^{k-1})^2\frac{\partial h}{\partial\eta} - \alpha^k(p^k)^2\frac{\partial h}			{\partial\eta} &= \frac{\left( \alpha^{k-1}p^{k-1} \right)^2}{\alpha^{k-1}} 					\frac{\partial h}{\partial\eta}\\
		&= \left( \frac{1}{\alpha^{k-1}}-\frac{1}{\alpha^{k}} \right)(\alpha^{k}p^{k})^2 					\frac{\partial h}{\partial\eta} \\
		&=-(\alpha^{k-1}-\alpha^{k})\frac{\alpha^{k}}{\alpha^{k-1}}(p^k)^2 \frac{\partial h}				{\partial\eta}
	\end{aligned}
\end{equation}
For $\mid \eta \mid=1$ is validated
\begin{equation}{\label{Aux_interf_1}}
 \mid u \mid^2= \mid (u.\eta) \mid^2+\mid u\times\eta \mid^2 
\end{equation}
Substituting (\ref{Aux_interf_1}) in (\ref{Aux_interf_B}), we have that
\begin{equation}{\label{Aux_interf_2}}
	\begin{aligned}
		\beta^k \mid u^k \mid^2 \frac{\partial h}{\partial\eta}-\beta^{k-1} \mid u^{k-1} \mid^2\frac{\partial h}{\partial\eta} &= \beta^k \left( \mid (u^k.\eta) \mid^2 + \mid (u^k\times\eta) \mid^2 \right)\frac{\partial h}{\partial\eta} - \beta^{k-1} \left( \mid(u^{k-1}{\Large\textbf{.}}\eta)\mid^2 +\right.\\
		 &+\left. \mid (u^{k-1}\times\eta) \mid^2 \right)\frac{\partial h}{\partial\eta} \\
		 &=\left(  \beta^k \mid u^k{\Large\textbf{.}}\eta \mid^2-\beta^{k-1}\mid u^{k-1}{\Large\textbf{.}}\eta \mid^2  \right)\frac{\partial h}{\partial\eta} + \left( \beta^k \mid u^k\times\eta \mid^2- \right.\\
		 &\left.\beta^{k-1}\mid  u^{k-1}\times\eta  \mid^2 \right)\frac{\partial h}{\partial\eta}\\
		 &= \left[  \frac{(\beta^k \mid u^k{\Large\textbf{.}}\eta \mid)^2}{\beta^k}- \frac{(\beta^{k-1}\mid 					u^{k-1}{\Large\textbf{.}}\eta \mid)^2}{\beta^{k-1}} \right]\frac{\partial h}{\partial\eta} +\\ 
		 & (\beta^{k}-\beta^{k-1})\mid u^k\times\eta \mid^2\frac{\partial h}{\partial\eta} \\
		 &= \left( \frac{1}{\beta^{k}}- \frac{1}{\beta^{k-1}} \right)(\beta^{k}\mid u^k{\Large\textbf{.}}\eta\mid)^2 + (\beta^{k}-\beta^{k-1})\mid  u^k\times\eta  \mid^2\frac{\partial h}{\partial\eta}\\
		 &= (\beta^{k-1}-\beta^{k})\frac{\beta^{k}}{\beta^{k-1}}\mid u^k.\eta \mid^2\frac{\partial h}{\partial\eta}- (\beta^{k-1}-\beta^{k})\mid  u^k\times\eta  					\mid^2\frac{\partial h}{\partial\eta}
	\end{aligned}
\end{equation}
Finally,
\begin{equation}{\label{Aux_interf_2}}
	\begin{aligned}
		2\beta^{k-1}(u^{k-1}{\Large\textbf{.}}\nabla h)(u^{k-1}.\eta)-2\beta^k (u^k.\nabla h)(u^k.\eta) &= 				2\beta^{k-1}\mid u^{k-1}{\Large\textbf{.}}\eta  \mid^2 -2\beta^k\mid u^k{\Large\textbf{.}}\eta \mid^2\frac{\partial h}{\partial\eta}\\
		&=2\frac{\left(\beta^{k-1}\mid u^{k-1}{\Large\textbf{.}}\eta \mid \right)^2}{\beta^{k-1}}\frac{\partial 			  h}{\partial\eta}-2\frac{\left( \beta^{k}\mid u^{k}{\Large\textbf{.}}\eta \mid \right)^2}{\beta^{k}}\frac{\partial h}{\partial\eta}\\
		&= 2\left(  \frac{1}{\beta^{k-1}}-\frac{1}{\beta^{k}} \right)(\beta^{k}\mid u^{k}{\Large\textbf{.}}\eta\mid)^2 \frac{\partial h}{\partial\eta} \\
		&=-2(\beta^{k-1}-\beta^{k})\frac{\beta^{k}}{\beta^{k-1}}\mid u^{k}{\Large\textbf{.}}\eta\mid^2\frac{\partial h}{\partial\eta}.
	\end{aligned}
\end{equation}
Substituting (\ref{Aux_interf}), (\ref{Aux_interf_1}) and (\ref{Aux_interf_2}) in (\ref{Aux_interf_B}), we have that
\begin{equation}{\label{Aux_interf_B1}}
	\begin{aligned}
		\vec{B}^{k-1}.\eta - \vec{B}^{k}.\eta &= -(\alpha^{k-1}-\alpha^k)\frac{\alpha^k}{\alpha^{k-1}}(p^k)^2\frac{\partial h}{\partial\eta}-(\beta^{k-1}-\beta^k)						\frac{\beta^k}{\beta^{k-1}}\mid u^k{\Large\textbf{.}}\eta  \mid^2\frac{\partial h}{\partial\eta}-\\
		& (\beta^{k-1}-\beta^{k})\mid u^k\times\eta  \mid^2 \frac{\partial h}{\partial\eta}\\
		&= -\frac{\partial h}{\partial\eta}\left\{ \frac{(\alpha^{k-1}-\alpha^k)}{\alpha^{k-1}}			\alpha^{k}(p^k)^2 +(\beta^{k-1}-\beta^{k})\frac{\beta^{k}}{\beta^{k-1}}\mid u^k{\Large\textbf{.}}\eta\mid^2 + (\beta^{k-1}-\beta^{k})\mid u^k\times\eta  \mid^2 \right\}.
	\end{aligned}
\end{equation}
\end{proof}

Now, we estimate the sixth term, $\sum_{m}^{k=0}\int_{\Omega_k}\int_{0}^{s}J^k dt dx$. Remember that,
\begin{equation}{\label{Termo_6}}
\sum_{m}^{k=0}\int_{\Omega_k}\int_{0}^{s}J^k dt dx=\sum_{k=0}^{m}\int_{\Omega_k}\int_{0}^{s}\left\{ \beta^k(\Delta h-1)\mid u^k \mid^2-2\beta^k \sum_{i,j=1}^{m}\frac{\partial^2 h}{\partial x_i \partial x_j}u_i^k u_j^k-\alpha^k(\Delta h-3)(p^k)^2 \right\}
\end{equation}
To estimate  (\ref{Termo_6}), we choose a function $h$ as:
\begin{equation}{\label{Fun_h}}
h(x)=\frac{1}{2}\mid  x-x_0  \mid^2 +\delta_0 \Phi(x)
\end{equation}
where  $x_0\in \sigma_1$ and $\Phi$ satisfy 
\begin{equation}{\label{Funcao_Phi}}
	\left\{\begin{aligned}
		\Delta\Phi &=1\quad\mbox{en}\quad \Omega\\
		\frac{\partial\Phi}{\partial\eta} &= 2\frac{\mbox{Vol}(\Omega)}{\mbox{area}(S_0)},\quad			\mbox{in},\quad S_0 \\
		\frac{\partial\Phi}{\partial\eta} &= -\frac{\mbox{Vol}(\Omega)}{\mbox{area}(S_1)},\quad			\mbox{in},\quad S_1 
	\end{aligned}\right.
\end{equation}
\begin{remark}{\label{Prop_Mu}}
Given $\mu=\mu(\Omega)$, by
\begin{equation}{\label{Mu}}
	\mu(\Omega)=\inf_{\begin{aligned}
	x\in\Omega \\
	\mid\xi\mid=1
	\end{aligned}}2\sum_{i,j=1}^{3}\frac{\partial^2\Phi(x)}{\partial x_i \partial x_j}\xi_i 		\xi_j
\end{equation}
we may observe that, considering  $\xi=(1,0,0), \xi=(0,1,0), \xi=(0,0,1)$ , have that
\[
\mu(\Omega)\leq 2\frac{\partial^2\Phi(x)}{\partial x_1^2},\quad  \mu(\Omega)\leq 2\frac{\partial^2\Phi(x)}{\partial x_2^2}, \quad \mu(\Omega)\leq 2\frac{\partial^2\Phi(x)}{\partial x_3^2}
\]
that, adding the last expressions we have that
\[
3\mu(\Omega)\leq 2\Delta\Phi,\Longrightarrow  \mu(\Omega)\leq \frac{2}{3}
\]
the expression of $h(x)$,  then 
\[
\frac{\partial^2 h(x)}{\partial x_i\partial x_j}=\delta_{ij}+\delta_0\frac{\partial^2\Phi(x)}{\partial x_i\partial x_j}
\]
and $\Delta h=3+\delta_0$. 
\end{remark}







\begin{lemma}
 Given $\{ u, p\}$ a regular solution of the problem (\ref{Par_Ecu_Ondas_ST1})-(\ref{Cond_Multilayer_1}) by theorem \ref{Sol_Fortes_P1}. Choosing $h$ as (\ref{Fun_h}), we have that 
 \[
\sum_{k=0}^{m}\int_{\Omega_k}\int_{0}^{s}J^k dt dx \leq \delta_0(1-\mu(\Omega))\sum_{k=0}^{m}\int_{\Omega_k}\int_{0}^{s}\left\{ \beta^k \mid u^k\mid^2+ \alpha^k(p^k)^2 \right\}dtdx
 \]
 for any $\delta_0>0$
\end{lemma}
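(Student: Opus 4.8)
The plan is to substitute the explicit Hessian data for $h$ recorded in Remark~\ref{Prop_Mu} directly into the integrand $J^k$, watch the leading $\beta^k$ contributions cancel, and then control the remaining quadratic form in $u^k$ by the constant $\mu(\Omega)$.

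First I would use the relations $\Delta h = 3+\delta_0$ and $\frac{\partial^2 h}{\partial x_i\partial x_j}=\delta_{ij}+\delta_0\frac{\partial^2\Phi}{\partial x_i\partial x_j}$ from the remark, which give $\Delta h-1=2+\delta_0$ and $\Delta h-3=\delta_0$. Substituting into the definition of $J^k$ and expanding
\[
\sum_{i,j=1}^{3}\frac{\partial^2 h}{\partial x_i\partial x_j}u_i^k u_j^k=\mid u^k\mid^2+\delta_0\sum_{i,j=1}^{3}\frac{\partial^2\Phi}{\partial x_i\partial x_j}u_i^k u_j^k,
\]
the two $2\beta^k\mid u^k\mid^2$ terms cancel, leaving the clean expression
\[
J^k=\delta_0\left[\beta^k\mid u^k\mid^2-2\beta^k\sum_{i,j=1}^{3}\frac{\partial^2\Phi}{\partial x_i\partial x_j}u_i^k u_j^k-\alpha^k(p^k)^2\right].
\]

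The key step is to estimate the Hessian quadratic form. By the definition (\ref{Mu}) of $\mu(\Omega)$, applied to the unit vector $u^k/\mid u^k\mid$ and using that the form is homogeneous of degree two in $u^k$, one obtains the pointwise bound $2\sum_{i,j}\frac{\partial^2\Phi}{\partial x_i\partial x_j}u_i^k u_j^k\geq\mu(\Omega)\mid u^k\mid^2$ at every $x\in\Omega_k$. Inserting this lower bound for the subtracted term yields
\[
J^k\leq\delta_0\left[\beta^k(1-\mu(\Omega))\mid u^k\mid^2-\alpha^k(p^k)^2\right].
\]
Since Remark~\ref{Prop_Mu} gives $\mu(\Omega)\leq\frac{2}{3}<1$, the factor $1-\mu(\Omega)$ is positive and $-\alpha^k(p^k)^2\leq 0\leq(1-\mu(\Omega))\alpha^k(p^k)^2$, so the pressure term is absorbed to give $J^k\leq\delta_0(1-\mu(\Omega))[\beta^k\mid u^k\mid^2+\alpha^k(p^k)^2]$. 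Integrating over $\Omega_k\times(0,s)$ and summing over $k$ produces the claimed inequality.

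The computation is essentially mechanical; the only point requiring care is the passage from the infimum over unit vectors in (\ref{Mu}) to the quadratic-form bound for an arbitrary field $u^k$, which rests on the homogeneity of the form and on the strict inequality $\mu(\Omega)<1$ guaranteed by the remark.
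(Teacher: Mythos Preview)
Your proof is correct and follows essentially the same route as the paper: substitute the Hessian data for $h$ from Remark~\ref{Prop_Mu}, let the $2\beta^k\mid u^k\mid^2$ terms cancel, bound the $\Phi$-Hessian quadratic form from below by $\mu(\Omega)\mid u^k\mid^2$ via (\ref{Mu}), and then discard or absorb the negative pressure term. The paper's computation (\ref{Estima_Termo_6}) is organized slightly differently (it drops $-\alpha^k\delta_0(p^k)^2$ first and then reinserts $\alpha^k(p^k)^2$ as a nonnegative add-on), but this is exactly your absorption step in another guise.
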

\begin{proof}
Using (\ref{Fun_h}), (\ref{Funcao_Phi}), (\ref{Mu}) and the observation (\ref{Prop_Mu}), we have that 
\begin{equation}{\label{Estima_Termo_6}}
	\begin{aligned}
		\sum_{k=0}^{m}\int_{\Omega_k}\int_{0}^{s}J^k dt dx &= \sum_{k=0}^{m}\int_{\Omega_k}				\int_{0}^{s}\left\{ \beta^k(2+\delta_0)\mid u^k \mid^2-2\beta^k \sum_{i,j=1}^{m}				\left( \delta_{ij}+\delta_0\frac{\partial^2\Phi}{\partial x_i\partial x_j}\right) u_i^k 			u_j^k-\alpha^k\delta_0(p^k)^2\right\}\\
		&\leq \sum_{k=0}^{m}\int_{\Omega_k}\int_{0}^{s}\left\{ \beta^k(2+\delta_0)\mid u^k 				 \mid^2-2\beta^k \mid u^k\mid^2- \beta^k\mu(\Omega)\delta_0\mid u^k\mid^2 -\alpha^k				 \delta_0(p^k)^2\right\}\\
		&\leq \sum_{k=0}^{m}\int_{\Omega_k}\int_{0}^{s}\left\{ \delta_0\beta^k\mid u^k 					 \mid^2-\mu(\Omega)\delta_0\beta^k \mid u^k\mid^2-\alpha^k\delta_0(p^k)^2\right\}dtdx\\
		&\leq \sum_{k=0}^{m}\int_{\Omega_k}\int_{0}^{s}\left\{  \delta_0(1-\mu(\Omega))\beta^k
		 \mid u^k \mid^2 \right\}dt dx\\
		&\leq \delta_0(1-\mu(\Omega))\sum_{k=0}^{m}\int_{\Omega_k}\int_{0}^{s}\left\{ \beta^k 			 \mid u^k\mid^2+ \alpha^k(p^k)^2 \right\}dtdx
	\end{aligned}
\end{equation}
\end{proof}

To estimate the rest of the terms of (\ref{Estim_Varios}), $2\sum_{k=0}^{m}\int_{\Omega_k}\beta^k u_0^k (x){\Large\textbf{.}}\int_{0}^{s}u^{k}(x,r)drdx$, considering a hypothesis about initial condition $u_0(x)$, we assumed that, it satisfied the following system:
\begin{equation}{\label{Hipo_U0}}
	\left\{ \begin{aligned}
		u_{0}^{k-1}.\eta &= u_{0}^{k}.\eta\quad\mbox{in}\quad\Gamma_k \\
		u_{0}^{k-1}\times\eta &= u_{0}^{k}\times\eta\quad\mbox{in}\quad\Gamma_k	\\
		u_{0}^{k} &=\nabla l^{k}(x)\quad\mbox{in}\quad\Gamma_k \\
		 l^{k} &\in H^{2}(\Omega_k)\quad \mbox{and}\quad h=0\quad\mbox{in}\quad S_1
	\end{aligned}\right.
\end{equation}
\begin{remark}
The hypothesis have been made  in  (\ref{Hipo_U0}), inclusive the solution of the problem satisfy the properties of the lemma (\ref{Prop_A_j}), these are necessaries because the domain is conexo.
\end{remark}
\begin{lemma}
Given  $\{ u, p\}$  regular solution of the problem (\ref{Par_Ecu_Ondas_Multilayer_1})-(\ref{Cond_Multilayer_1}) by theorem \ref{Sol_Fortes_P1},  and the initial condition $u_0$ that satisfied (\ref{Hipo_U0}). Then  
\[
2\sum_{k=0}^{m}\int_{\Omega_k}\beta^k u_0^k (x)\int_{0}^{s}u^{k}(x,r)drdx = 2\sum_{k=1}^{m}\int_{\Gamma_k}l^k(p^k(x,s)-p_{0}^{k}(x))dx
\]
\end{lemma}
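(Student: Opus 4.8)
The plan is to convert the volume pairing on the left into interface data by combining the gradient structure $u_0^k=\nabla l^k$ furnished by hypothesis (\ref{Hipo_U0}) with the continuity equation of (\ref{Par_Ecu_Ondas_Multilayer_1}). First I would introduce the time--averaged velocity $U^k(x,s)=\int_0^s u^k(x,r)\,dr$ and record the two facts that drive everything: from the second equation of (\ref{Par_Ecu_Ondas_Multilayer_1}) we have $\beta^k\,\mbox{div}(u^k)=-\partial_t p^k$, so integrating in time gives $\beta^k\,\mbox{div}(U^k)=-(p^k(x,s)-p_0^k(x))$; and by (\ref{Hipo_U0}) the initial field is a gradient, $u_0^k=\nabla l^k$ with $l^k\in H^2(\Omega_k)$ and $l^k=0$ on $S_1$.

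Next, on each cell $\Omega_k$ I would apply Green's formula to the integrand $\beta^k u_0^k\cdot U^k=\beta^k\nabla l^k\cdot U^k$:
\[
\int_{\Omega_k}\beta^k\nabla l^k\cdot U^k\,dx=\int_{\partial\Omega_k}\beta^k l^k\,(U^k\cdot\eta)\,d\Gamma-\int_{\Omega_k}\beta^k l^k\,\mbox{div}(U^k)\,dx,
\]
and substitute the continuity identity so that the bulk integrand becomes exactly $l^k\,(p^k(x,s)-p_0^k(x))$. Summing over $k$ and splitting $\partial\Omega_k$ into its portions of $S_0$, of $S_1$, and of the interfaces $\Gamma_k,\Gamma_{k+1}$, the contribution on $S_0$ drops because $u^k\cdot\eta=0$ there forces $U^k\cdot\eta=0$, and the contribution on $S_1$ drops because $l^k=0$ there. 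What remains is an interface boundary sum, carrying $\beta^k(U^k\cdot\eta)$, plus a volume remainder.

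The crux, and the step I expect to be the real obstacle, is the reorganization of what survives into the pure interface sum $2\sum_{k=1}^m\int_{\Gamma_k}l^k(p^k-p_0^k)\,d\Gamma$. The honest difficulty is that the Green computation above produces two distinct families of terms: the volume quantity $\sum_k\int_{\Omega_k}l^k(p^k-p_0^k)\,dx$ and the interface traces of $\beta^k(U^k\cdot\eta)$; to land on the stated right--hand side one must show these recombine and localize onto the $\Gamma_k$. Here the two time--integrated transmission conditions $\beta^{k-1}(U^{k-1}\cdot\eta)=\beta^k(U^k\cdot\eta)$ (from (\ref{Cond_Multilayer_1})) and the continuity of $u_0$ across $\Gamma_k$ (both $u_0^{k-1}\cdot\eta=u_0^k\cdot\eta$ and $u_0^{k-1}\times\eta=u_0^k\times\eta$, hence $\nabla l^{k-1}=\nabla l^k$ on $\Gamma_k$, permitting the normalization $l^{k-1}=l^k$ on $\Gamma_k$) are the tools to merge the two--sided traces with the correct normal orientations.

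I would carry out the matching by pairing each interface $\Gamma_k$ against its two adjacent cells $\Omega_{k-1}$ and $\Omega_k$, using $l^{k-1}=l^k$ and the matched normal fluxes to reduce the two one--sided boundary contributions to a single trace on $\Gamma_k$, and then re--expressing the surviving bulk pieces cellwise through $l^k(p^k-p_0^k)=-\beta^k l^k\,\mbox{div}(U^k)$ so that interior contributions of neighbouring cells telescope. Verifying that this telescoping is exact, that the external boundaries $S_0$ and $S_1$ leave no residue, and that the orientations combine to give precisely $\int_{\Gamma_k}l^k(p^k-p_0^k)\,d\Gamma$ is where the genuine work lies; everything else is routine integration by parts and use of the energy conservation already established for (\ref{Par_Ecu_Ondas_Multilayer_1}).
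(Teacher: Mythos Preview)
Your approach is the paper's approach: write $u_0^k=\nabla l^k$, integrate by parts, replace $-\beta^k\,\mbox{div}(u^k)$ by $\partial_t p^k$, and kill the boundary pieces on $S_0$ (via $u\cdot\eta=0$) and on $S_1$ (via $l=0$). Where you diverge from the paper is in the final ``telescoping'' stage, and the divergence is not your fault: the displayed right--hand side in the lemma is a typo. The target should read
\[
2\sum_{k=0}^{m}\int_{\Omega_k} l^k\bigl(p^k(x,s)-p_0^k(x)\bigr)\,dx,
\]
a sum of volume integrals over the $\Omega_k$, not interface integrals over $\Gamma_k$. This is exactly the quantity that your Green computation produces in the bulk, and it is also the quantity that the paper immediately feeds into the next lemma (Lemma~\ref{Lemma_LP}), where it is estimated as $\sum_k\int_{\Omega_k} l^k(p^k-p_0^k)$. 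So the ``real obstacle'' you anticipate---recombining the volume pieces into pure interface data---does not exist; you should simply stop at the volume identity.

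What does remain is to check that the interface boundary contributions
\[
\sum_{k=1}^{m}\int_{\Gamma_k}\bigl[\beta^{k-1}l^{k-1}(U^{k-1}\cdot\eta)-\beta^{k}l^{k}(U^{k}\cdot\eta)\bigr]
\]
vanish. Your observation that the full continuity $u_0^{k-1}=u_0^k$ on $\Gamma_k$ forces $\nabla(l^{k-1}-l^k)=0$ there, hence $l^{k-1}-l^k$ is constant on each $\Gamma_k$, is precisely what the paper uses. In fact your proposed normalization $l^{k-1}=l^k$ on $\Gamma_k$ (legitimate since each $l^k$ is only a potential, determined up to an additive constant, and one can adjust these constants layer by layer starting from the constraint $l=0$ on $S_1$) is cleaner than the paper's handling: with that normalization the interface term collapses immediately to $\int_{\Gamma_k} l^k\bigl[\beta^{k-1}(U^{k-1}\cdot\eta)-\beta^k(U^k\cdot\eta)\bigr]=0$ by the transmission condition (\ref{Cond_Multilayer_1}). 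The paper instead keeps the constant $C$ and argues via an additional flux identity; your route is shorter.
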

\begin{proof}
Using the hypothesis  $l$ and  the boundary condition in $S_0$, we have that 
\begin{equation}{\label{Estim_termo3}}
	\begin{aligned}
		2\sum_{k=0}^{m}\int_{\Omega_k}\beta^k u_0^k (x)\int_{0}^{s}u^{k}(x,r)drdx &=2\sum_{k=0}^{m}\int_{\Omega_k}\int_{0}^{s}\beta^k\nabla l^k{\Large\textbf{.}}u^k \\
		&= 2\sum_{k=0}^{m}\int_{0}^{s}\left\{  -\int_{\Omega_k}\beta^k l^k\mbox{div}(u^k)+ 				\int_{\partial\Omega_k}\beta^k l^k (u^k{\Large\textbf{.}}\eta) \right\}\\
		&= 2\sum_{k=0}^{m}\int_{0}^{s}\int_{\Omega_k}l^k\frac{\partial p^k}{\partial t} +2\sum_{k=0}^{m}\int_{0}^{s}\int_{\partial\Omega_k}\beta^k l^k (u^k{\Large\textbf{.}}\eta)\\
		&=2\sum_{k=0}^{m}\int_{0}^{s}\int_{\Omega_k}l^k\frac{\partial p^k}{\partial t} + 2\int_{0}^{s}\int_{S_1}\beta l(u{\Large\textbf{.}}\eta) +\\
		& 2\sum_{k=1}^{m}\int_{0}^{s}\int_{\Gamma_k}\left[ \beta^{k-1}l^{k-1}(u^{k-1}{\Large\textbf{.}}\eta)-\beta^{k}l^{k}(u^{k}.\eta) \right]+ 2\int_{0}^{s}\int_{S_0}\beta l(u{\Large\textbf{.}}\eta)\\
		&= 2\sum_{k=1}^{m}\int_{\Gamma_k}l^k(p^k(x,s)-p_{0}^{k}(x)) + \\ 
		& 2\sum_{k=1}^{m}\int_{0}^{s}\int_{\Gamma_k}\left[  \beta^{k-1}l^{k-1}(u^{k-1}{\Large\textbf{.}}\eta)-\beta^{k}l^{k}(u^{k}{\Large\textbf{.}}\eta) \right]
	\end{aligned}
\end{equation}
by the hypothesis made in the initial condition (\ref{Hipo_U0}), we have that 
\begin{equation}{\label{Estim_termo3_0}}
	\begin{aligned}
		\mbox{De},\quad u^{k-1}_{0}{\Large\textbf{.}}\eta=u^{k}_{0}.\eta\quad\mbox{in}\quad\Gamma_k & 					\Longrightarrow \nabla l^{k-1}{\Large\textbf{.}}\eta= \nabla l^{k}.\eta\quad\mbox{in}\quad\Gamma_k \nonumber\\
		& \Longrightarrow \nabla(l^{k-1}-l^{k}){\Large\textbf{.}}\eta = 0 \quad\mbox{in}\quad\Gamma_k \\
		& \Longrightarrow \nabla(l^{k-1}-l^{k}){\Large\textbf{.}}\eta \perp \eta\quad\mbox{in}\quad\Gamma_k.
	\end{aligned}
\end{equation}
\begin{equation}{\label{Estim_termo3_1}}
	\begin{aligned}
		\mbox{De},\quad u^{k-1}_{0}\times\eta=u^{k}_{0}\times\eta\quad\mbox{in}\quad\Gamma_k &\Longrightarrow \nabla l^{k-1}\times\eta= \nabla l^{k}\times\eta\quad\mbox{in}\quad				\Gamma_k\\
		& \Longrightarrow \nabla(l^{k-1}-l^{k})\times\eta = 0 \quad\mbox{in}\quad\Gamma_k \\
		& \Longrightarrow \nabla(l^{k-1}-l^{k})\times\eta \mbox{//} \eta\quad\mbox{in}\quad\Gamma_k.
	\end{aligned}
\end{equation}
thereby, $\nabla(l^{k-1}-l^k)=0$ in $\Gamma_k$, it implies that  $l^{k-1}-l^k=\mbox{C}$ in $\Gamma_k$, $\mbox{C}=\mbox{constant}$.
Substituting (\ref{Estim_termo3_1}) in the second tern on the right-hand side of (\ref{Estim_termo3}), then 
\begin{equation}{\label{Estim_termo3_2}}
	\begin{aligned}
		2\sum_{k=1}^{m}\int_{0}^{s}\int_{\Gamma_k}\left[  \beta^{k-1}l^{k-1}(u^{k-1}.\eta)- \beta^{k}l^{k}(u^{k}.\eta)\right] &= 2\sum_{k=1}^{m}\int_{0}^{s}\int_{\Gamma_k} (l^{k-1}-l^{k})\beta^k(u^k.\eta)\\
		&= 2\mbox{C}\sum_{k=1}^{m}\int_{0}^{s}\int_{\Gamma_k}\beta^k(u^k.\eta)\\
		&= 2\int_{0}^{s}\int_{S_0}\beta(u.\eta)=0
	\end{aligned}
\end{equation}
and following substituting (\ref{Estim_termo3_2}) in (\ref{Estim_termo3}).
\end{proof}
Substituting the obtained estimations, (\ref{Primeiro_Termo}), (\ref{Segundo_Termo}), (\ref{Quarto_Termo}), (\ref{Quinto_Termo}),(\ref{Termo_6}), (\ref{Estim_termo3_2}) in (\ref{Estim_Varios}), then ,
\begin{equation}{\label{Estim_varios_1}}
	\begin{aligned}
		s\sum_{k=0}^{m}\int_{\Omega_k}\left[ \beta^k \mid u^k \mid^2 + \alpha^k(p^k)^2  				\right]dx &= 2C_1s\sum_{k=0}^{m}\int_{\Omega_k}\left[ \beta^k \mid u^k \mid^2 + 				\alpha^k(p^k)^2\right]dx - \\ 
		&\frac{\partial}{\partial s}\sum_{k=0}^{m}\alpha^k\left[ p^k(x,r)dr \right]^2 dx 
		+ \int_{0}^{s}\beta\mid u \mid^2\frac{\partial h}{\partial\eta}+\\
		&\sum_{k=0}^{m}\int_{0}^{s}\int_{\Gamma_k}\left[ \vec{B}^{k-1}.\eta - \vec{B}^{k}.				 \eta \right]+ \int_{0}^{s}\int_{S_0}\left( \alpha p^2\frac{\partial h}{\partial\eta} 			-\beta\mid u\mid^2 \frac{\partial h}{\partial\eta}\right) +\\
		& \delta_0(1-\mu(\Omega))\sum_{k=0}^{m}\int_{0}^{s}\int_{\Gamma_k}\left[ \beta^k \mid 			u^k \mid^2 + \alpha^k(p^k)^2 \right] +\\
		& 2\sum_{k=0}^{m}\int_{\Omega_k}l^k(p^k(x,s)-p^k_0(x))dx
	\end{aligned}
\end{equation}
Integrating  (\ref{Estim_varios_1}) in $(0,T)$ and using the independence of energy of the model with the time, we have that 
\begin{equation}{\label{Estim_varios_2}}
	\begin{aligned}
		& \frac{T}{2}\left[ 1-\delta_0(1-\mu(\Omega))\right]\sum_{k=0}^{m}\int_{\Omega_k} 				  \left[\beta^k \mid u^k \mid^2 + \alpha^k(p^k)^2 \right]dx \leq 2C_1 T\sum_{k=0}^{m}			  \int_{\Omega_k}\left[\beta^k \mid u^k \mid^2 + \alpha^k(p^k)^2 \right]dx-\\
		&\sum_{k=0}^{m}\int_{\Omega_k}\alpha^k \left[ \int_{0}^{T}p^k(x,r) \right]^2 dx+				\int_{0}^{T}\int_{0}^{s}\int_{S_1}\beta\mid u\mid^2 \frac{\partial h}{\partial\eta}+			\sum_{k=0}^{m}\int_{0}^{T}\int_{0}^{s}\int_{\Gamma_k}\left[ \vec{B}^{k-1}.\eta - 				\vec{B}^{k}.\eta \right] + \\
		& \int_{0}^{T}\int_{0}^{s}\int_{S_0}\left( \alpha p^2\frac{\partial h}{\partial\eta} 			  - \beta\mid u \mid^2\frac{\partial h}{\partial\eta}  \right) + 2\sum_{k=0}^{m}				\int_{0}^{T}\int_{\Omega_k}l^k (p^k(x,s)-p^{k}_{0}(x))
	\end{aligned}
\end{equation}
Now, we need the hypothesis in the domain $\Omega$. Given $\delta_0>0$ such that, some  $x_0\in \sigma_1$, we have
\begin{equation}{\label{Hipo_Omega}}
	\left\{ \begin{aligned}
		&\delta_{0}(1-\mu(\Omega)) < 1, \\
		&(x-x_0).\eta \geq -2\delta_0\frac{\mbox{Vol}(\Omega)}{\mbox{area}(S_0)},\quad					\mbox{for}\quad x\in S_0\\
		&(x-x_0).\eta \leq \delta_0\frac{\mbox{Vol}(\Omega)}{\mbox{area}(S_1)}, \quad					\mbox{for}\quad x\in S_1\\
        &(x-x_0).\eta+\delta_0\frac{\partial \Phi}{\partial\eta} \geq 0, \quad \forall x	\in			\Gamma_k, \quad k=1,2,\dots,m
	\end{aligned}\right.
\end{equation}

\begin{remark}
The hypothesis made in (\ref{Hipo_Omega}), These are true when $\delta_0=0$ satisfied the surf of kind "star-shaped".
\end{remark}
Using (\ref{Hipo_Omega}) in $S_1$, we have
\begin{equation}{\label{Hipo_S1}}
	\frac{\partial h}{\partial\eta}=\nabla h.\eta=(x-x_0).\eta + \delta_0\frac{\partial\Phi}		{\partial\eta}=(x-x_0).\eta-\delta_0\frac{\mbox{Vol}(\Omega)}{\mbox{area}S_1}\leq 0
\end{equation}
and, for $x\in S_0$,
\begin{equation}{\label{Hipo_S0}}
	\frac{\partial h}{\partial\eta}=\nabla h.\eta=(x-x_0).\eta + \delta_0\frac{\partial\Phi}		{\partial\eta}=(x-x_0).\eta+2\delta_0\frac{\mbox{Vol}(\Omega)}{\mbox{area}S_0}\geq 0
\end{equation}
Substituting (\ref{Hipo_S0}) and (\ref{Hipo_S1}) in (\ref{Estim_varios_2}), then
\begin{equation}
	\int_{0}^{T}\int_{0}^{s}\int_{S_1}\beta \mid u \mid^2\frac{\partial h}{\partial\eta} \leq 0
\end{equation}
and
\begin{equation}
   -\int_{0}^{T}\int_{0}^{s}\int_{S_0}\beta \mid u \mid^2\frac{\partial h}{\partial\eta} \leq 0
\end{equation}
moreover, if the coefficients  $\alpha^{k},\beta^{k}$ satisfied 
\begin{eqnarray}{\label{Mono_coef}}
\alpha^{k-1} &\leq & \alpha^{k} \\\nonumber
\beta^{k-1} &\leq & \beta^{k}\nonumber
\end{eqnarray}
then, the fourth condition in (\ref{Hipo_Omega}), (\ref{Mono_coef}) and the lemma (\ref{Lema_Interfaces}) we have that 
\begin{equation}{\label{Aux_Interf}}
	\sum_{k=1}^{m}\int_{0}^{T}\int_{0}^{s}\int_{\Gamma_k}\left[ \vec{B}^{k-1}.\eta -\vec{B}      	^{k}.\eta  \right]\leq 0
\end{equation}
Using that the associate energy to the system (\ref{Par_Ecu_Ondas_ST1})-(\ref{Par_Ecu_Ondas_Multilayer_1}); with not dependency of the time, we prove the next lemma
\begin{lemma}{\label{Lemma_LP}}
Given a regular solution $\{ u, p\}$  of the  problem (\ref{Par_Ecu_Ondas_ST1}) by theorem \ref{Sol_Fortes_P1},  and the initial condition $u_0$ that satisfied (\ref{Hipo_U0}). Then 
\[
\begin{aligned}
2 \sum_{k=0}^{m}\int_{0}^{T}\int_{\Omega_k}l^k (p^k(x,s)-p^{k}_{0}(x))dx ds &\leq \sum_{k=0}^{m}\int_{\Omega_k}\left[ \int_{0}^{T}p^k(x,s) \right]^2 dx + \\
& (C_3+C_4T)\sum_{k=0}^{m}\int_{\Omega_k}\left[ \beta^k\mid u^k\mid^2 +\alpha^k(p^k)^2 \right]dx
\end{aligned}
\]
where $C_3=C_2\max_{k}\left\{ (\alpha^k \beta^k)^{-1}  \right\}$ and  $C_4=\max_{k}\left\{ C_2(\beta^k)^{-1}, (\alpha^k)^{-1}  \right\}$
\end{lemma}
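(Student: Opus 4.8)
The plan is to write the integrand as $l^k(p^k(x,s)-p_0^k(x)) = l^k p^k(x,s) - l^k p_0^k(x)$, estimate the two resulting pieces separately, and then use the Poincaré inequality together with the time-independence of the energy $E_1$ (established just before this section) to convert everything into $\sum_{k}\int_{\Omega_k}[\beta^k|u^k|^2+\alpha^k(p^k)^2]dx$. The term carrying $p^k(x,s)$ will generate the first term of the claimed bound with coefficient exactly one, while the term carrying the initial datum $p_0^k$ will generate the factor $T$.

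First I would treat the piece containing $p^k(x,s)$. Applying Fubini to move the time integral inside,
\[
2\sum_{k=0}^m\int_0^T\!\!\int_{\Omega_k}l^k p^k(x,s)\,dx\,ds = 2\sum_{k=0}^m\int_{\Omega_k}l^k\Big(\int_0^T p^k(x,s)\,ds\Big)dx,
\]
I would then use the elementary inequality $2ab\le a^2+b^2$ with $a=\int_0^T p^k\,ds$ and $b=l^k$. This peels off the term $\sum_{k}\int_{\Omega_k}\big[\int_0^T p^k(x,s)\,ds\big]^2dx$ with coefficient $1$, which is exactly the first term on the right of the statement, and leaves a remainder $\sum_{k}\int_{\Omega_k}(l^k)^2dx$. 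For the piece containing $p_0^k$, since $p_0^k$ is independent of $s$ the $s$-integration only contributes a factor $T$, giving $-2T\sum_{k}\int_{\Omega_k}l^k p_0^k\,dx$, and a further application of $2ab\le a^2+b^2$ bounds this by $T\sum_{k}\int_{\Omega_k}\big[(l^k)^2+(p_0^k)^2\big]dx$.

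The decisive step is then to control $\sum_{k}\int_{\Omega_k}(l^k)^2dx$ by the energy. Using the boundary condition $l=0$ on $S_1$ recorded in (\ref{Hipo_U0}), I would invoke the Poincaré inequality $\int_{\Omega_k}(l^k)^2dx\le C_2\int_{\Omega_k}|\nabla l^k|^2dx$, with $C_2$ the Poincaré constant, and replace $\nabla l^k$ by $u_0^k$ via the hypothesis $u_0^k=\nabla l^k$ in (\ref{Hipo_U0}). Normalizing by the coefficients, $|u_0^k|^2=(\beta^k)^{-1}\beta^k|u_0^k|^2$ and $(p_0^k)^2=(\alpha^k)^{-1}\alpha^k(p_0^k)^2$, and using that $E_1$ does not depend on $t$, I would pass from the data at time $0$ to the solution at time $s$, so that $\sum_{k}\int_{\Omega_k}[\beta^k|u_0^k|^2+\alpha^k(p_0^k)^2]dx=\sum_{k}\int_{\Omega_k}[\beta^k|u^k|^2+\alpha^k(p^k)^2]dx$. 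Absorbing the constant coming from the remainder of the first piece into $C_3=C_2\max_k\{(\alpha^k\beta^k)^{-1}\}$ and the one multiplying $T$ from the second piece into $C_4=\max_k\{C_2(\beta^k)^{-1},(\alpha^k)^{-1}\}$ then produces the stated inequality.

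The main obstacle I anticipate is the Poincaré step. The function $l$ is only defined piecewise, as $l^k$ on each subdomain $\Omega_k$, and from the previous lemma it is continuous across each interface $\Gamma_k$ only up to an additive constant. Establishing a Poincaré constant $C_2$ that is uniform over the whole decomposition, and justifying that the single anchoring $l=0$ on $S_1$, combined with the constant interface jumps, is enough to bound the entire family $\{l^k\}$, is the delicate point; by contrast the remaining manipulations (Fubini, the quadratic inequalities, and the energy conservation already proved above) are routine.
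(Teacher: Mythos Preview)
Your proposal is correct and follows essentially the same route as the paper's proof: split into the $l^kp^k$ and $l^kp_0^k$ pieces, apply Fubini and Young's inequality, then Poincar\'e (with constant $C_2$, using $l=0$ on $S_1$ and $\nabla l^k=u_0^k$) to convert $\int(l^k)^2$ into $\int|u_0^k|^2$, and finally energy conservation. The only minor discrepancy is that to land exactly on $C_3=C_2\max_k\{(\alpha^k\beta^k)^{-1}\}$ the paper applies the \emph{weighted} Young inequality $2ab\le(\alpha^k)^{-1}b^2+\alpha^k a^2$ to the first piece rather than your unweighted $2ab\le a^2+b^2$; your concern about justifying a uniform Poincar\'e constant across the layers is legitimate, but the paper simply introduces $C_2$ without further comment.
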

\begin{proof}
\begin{equation}{\label{Aux_Todos}}
	 \begin{aligned}
	 &2 \sum_{k=0}^{m}\int_{0}^{T}\int_{\Omega_k}l^k (p^k(x,s)-p^{k}_{0}(x))=2\sum_{k=0}^{m}			\int_{0}^{T}\int_{\Omega_k}\mid l^k p^k(x,s)\mid +2\sum_{k=0}^{m}\int_{0}^{T}					\int_{\Omega_k}\mid l^k p^k_0(x)\mid \\
		&2\leq \sum_{k=0}^{m}\int_{\Omega_k} \mid l^k(x)\mid \int_{0}^{T} \mid l^k p^k(x,t)\mid 		+ 2\sum_{k=0}^{m}\int_{0}^{T}\left( \int_{\Omega_k} \mid l^k(x)\mid^2 \right)^{1/				2}\left( \int_{\Omega_k} \mid p^k(x,s)\mid^2 \right)^{1/2}\\
		&\leq 2 \sum_{k=0}^{m}\left[\int_{\Omega_k} (l^k(x))^2 \right]^{1/2}\left[ 						 \int_{\Omega_k}\left[  p^k(x,s) \right]^2\ dx \right]^{1/2} + \sum_{k=0}^{m}\int_{0}			 ^{T}\int_{\Omega_k}\left\{  (l^k)^2 +(p^k)^2  \right\}\\
		&\leq \sum_{k=0}^{m}\left\{ (\alpha^{k-1})^{-1}\int_{\Omega_k}(l^k(x))^2+\alpha^k				\int_{\Omega_k}\left[ \int_{0}^{T}p^k(x,s) \right]^2 dx \right\} + \sum_{k=0}^{m}				\int_{0}^{T}\int_{\Omega_k}\left\{ (l^k)^2 + (p^k)^2 \right\}\\
		&\leq C_2\sum_{k=0}^{m}\int_{\Omega_k}(\alpha^k)^{-1}\mid \nabla l^k \mid^2 + 			        \sum_{k=0}^{m}\alpha^k\left[ \int_{0}^{T}p^{k}(x,s) \right]^2 dx +C_2\sum_{k=0}^{m}				\int_{0}^{T}\int_{\Omega_k}\mid \nabla l^k \mid^2 + \sum_{k=0}^{m}\int_{0}^{T}					\int_{\Omega_k}(p^k)^2 \\
		&\leq C_2\sum_{k=0}^{m}\int_{\Omega_k}(\alpha^k)^{-1}\mid u_{0}^{k}(x) \mid^2 + 			        \sum_{k=0}^{m}\alpha^k\left[ \int_{0}^{T}p^{k}(x,s) \right]^2 dx +C_2\sum_{k=0}^{m}				\int_{0}^{T}\int_{\Omega_k}\mid \nabla l^k \mid^2 + \sum_{k=0}^{m}\int_{0}^{T}					\int_{\Omega_k}(p^k)^2 \\
		&\leq C_2\max_{k}\{(\alpha^k\beta^k)^{-1}\}\sum_{k=0}^{m}\int_{\Omega_k}\left[ \beta^k			 \mid u^k\mid^2 +\alpha^k(p^k)^2 \right] + \sum_{k=0}^{m}\alpha^k\left[ \int_{0}			     ^{T}p^{k}(x,s) \right]^2 dx +\\
		& C_2\sum_{k=0}^{m}\int_{\Omega_k}(\beta^k)^{-1}\int_{0}^{T}\int_{\Omega_k}\left[ 				\beta^k \mid u^k\mid^2 +\alpha^k(p^k)^2 \right] + \sum_{k=0}^{m}(\alpha^k)^{-1}\int_{0}			^{T}\int_{\Omega_k}\left[ \beta^k\mid u^k\mid^2 +\alpha^k(p^k)^2 \right]\\
		& \leq C_2\max_{k}\{(\alpha^k\beta^k)^{-1}\}\sum_{k=0}^{m}\int_{\Omega_k}\left[ \beta^k			 \mid u^k\mid^2 +\alpha^k(p^k)^2 \right] + \sum_{k=0}^{m}\alpha^k\left[ \int_{0}			     ^{T}p^{k}(x,s) \right]^2 dx +\\
		& \max_{k}\{C_2(\beta^k))^{-1},(\alpha^k)^{-1}\}T \sum_{k=0}^{m}\int_{\Omega_k}\left[ 			   \beta^k\mid u^k\mid^2 +\alpha^k(p^k)^2 \right]dx \\
		&\leq \sum_{k=0}^{m}\int_{\Omega_k}\left[ \int_{0}^{T}p^k(x,s) \right]^2 dx + (C_3 				 +C_4T)\sum_{k=0}^{m}\int_{\Omega_k}\left[ \beta^k\mid u^k\mid^2 +\alpha^k(p^k)^2 				 \right]dx
	\end{aligned}
\end{equation} $C_3=C_2\max_{k}\left\{ (\alpha^k \beta^k)^{-1}  \right\}$ and $C_4=\max_{k}\left\{ C_2(\beta^k)^{-1}, (\alpha^k)^{-1}  \right\}$
where $C_3=C_2\max_{k}\left\{ (\alpha^k \beta^k)^{-1}  \right\}$ and $C_4=\max_{k}\left\{ C_2(\beta^k)^{-1}, (\alpha^k)^{-1}  \right\}$
\end{proof}
Substituting in (\ref{Estim_varios_2}), and using the $\frac{\partial h}{\partial\eta}\geq 0$  in $S_0$,  we have that 
\begin{equation}{\label{Ultima_Estim_0}}
	\begin{aligned}
		& \frac{T}{2}\left[ 1-\delta_0(1-\mu(\Omega))\right]\sum_{k=0}^{m}\int_{\Omega_k} 				  \left[\beta^k \mid u^k \mid^2 + \alpha^k(p^k)^2 \right]dx \\
		& \leq\left\{ (2C_1+C_4)T+C_3 \right\}\sum_{k=0}^{m}\int_{\Omega_k}\left[\beta^k \mid u^k \mid^2 +\alpha^k(p^k)^2 \right]dx + T\int_{0}^{T}\int_{S_0}\alpha p^2 						\frac{\partial h}{\partial\eta}
	\end{aligned}
\end{equation}
thus,
\begin{equation}{\label{Ultima_Estim}}
	\begin{aligned}
		& \frac{T}{2}\left[ 1-\delta_0(1-\mu(\Omega))\right]\sum_{k=0}^{m}\int_{\Omega_k} 				  \left[\beta^k \mid u^k \mid^2 + \alpha^k(p^k)^2 \right]dx -2C_5 
		  \sum_{k=0}^{m}\int_{\Omega_k}\left[\beta^k \mid u^k \mid^2 +\alpha^k(p^k)^2 \right]dx 		   \leq \\
		& 2 \int_{0}^{T}\int_{S_0}\alpha p^2\frac{\partial h}{\partial\eta}dS_0 dt
	\end{aligned}
\end{equation}
where $C_5=C_3+(2C_1+C_4)T$ and considered  $T>\max{1,(2C_1+C_4)}$. 
We have proved the 
\begin{theorem}{\label{Desig_Obser_Sist_1}}
Taking $\Phi$ as in (\ref{Funcao_Phi}), the geometry properties (\ref{Hipo_Omega}), and the hypothesis of monotony of coefficients (\ref{Mono_coef}) and the hypothesis (\ref{Hipo_U0}); these were made for the initial condition. Then, $\exists C_5>0$, with independence of $t,u,u_0,p_0$, such that 
\[
	\frac{T}{2}\left[ 1-\delta_0(1-\mu(\Omega))-2C_5\right]\sum_{k=0}^{m}\int_{\Omega_k} 			\left[\beta^k \mid u^k \mid^2 + \alpha^k(p^k)^2 \right]dx \leq 2 \int_{0}^{T}\int_{S_0}			\alpha p^2\frac{\partial h}{\partial\eta}dS_0 dt.
\]
\end{theorem}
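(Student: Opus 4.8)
The plan is to run the multiplier method to its conclusion, treating the observability estimate as the bookkeeping endpoint of the conservation identity (\ref{Eq_Control}). First I would integrate (\ref{Eq_Control}) over each $\Omega_k\times(0,s)$ and sum in $k$, which produces the master identity (\ref{Estim_Varios}) expressing $s$ times the instantaneous energy as a sum of six terms: two interior quadratic terms, two terms carrying the initial data, one boundary flux term $\int\vec{B}^k\cdot\eta$, and the volume term in $J$. The whole argument then reduces to estimating these six contributions one at a time, so that every term is either absorbed into a constant multiple of the energy or shown to have a favorable sign.

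For the first and fourth terms I would invoke the Hölder bound (\ref{Primeiro_Termo}) and its twin (\ref{Quarto_Termo}), which dominate the $p\,(u\cdot\nabla h)$ expressions by $C_1$ times the energy, using the time-independence of the energy established earlier. The second term I would rewrite as the exact $s$-derivative (\ref{Segundo_Termo}), deliberately keeping the negative $[\int_0^s p]^2$ quantity in reserve. The $J$-term I would control by the lemma giving $\delta_0(1-\mu(\Omega))$ times the energy, which is where the choice of $h$ in (\ref{Fun_h}) and the spectral-gap constant $\mu(\Omega)$ from (\ref{Mu}) enter. The initial-velocity term I would reduce, via the lemma culminating in (\ref{Estim_termo3_2}), to interface integrals that vanish once the hypothesis (\ref{Hipo_U0}) forces $u_0=\nabla l$ with $l^{k-1}-l^k$ constant on each $\Gamma_k$. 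After substituting all of these into (\ref{Estim_varios_1}) and integrating in $t$ over $(0,T)$, again using energy conservation, I would arrive at (\ref{Estim_varios_2}).

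The decisive step is the control of the boundary and interface fluxes. On $S_1$ and $S_0$ I would use the geometric hypotheses (\ref{Hipo_Omega}) to pin down the sign of $\partial h/\partial\eta$, as in (\ref{Hipo_S1})--(\ref{Hipo_S0}), so that the $S_1$ flux is nonpositive and only the $S_0$ flux survives on the right. The genuinely delicate point --- and the one I expect to be the main obstacle --- is the interface sum over the $\Gamma_k$: here I would feed the explicit jump formula of Lemma \ref{Lema_Interfaces} into the fourth condition of (\ref{Hipo_Omega}) together with the coefficient monotonicity (\ref{Mono_coef}) to conclude (\ref{Aux_Interf}), namely that the transmission fluxes have the correct sign and may be discarded; this is exactly where the loss of continuity at the interfaces is absorbed, and where a wrong inequality direction would collapse the whole estimate. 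Finally I would apply Lemma \ref{Lemma_LP} to absorb the remaining $l^k$ term, whose $[\int_0^T p]^2$ part cancels against the reserved negative quantity from (\ref{Segundo_Termo}); collecting the surviving multiples of the energy into the single constant $C_5$ yields (\ref{Ultima_Estim}) and hence the stated inequality.
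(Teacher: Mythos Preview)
Your proposal is correct and follows essentially the same route as the paper: integrate the multiplier identity to obtain (\ref{Estim_Varios}), bound the six right-hand terms via (\ref{Primeiro_Termo}), (\ref{Segundo_Termo}), (\ref{Quarto_Termo}), Lemma~\ref{Lema_Interfaces}, the $J$-lemma, and the $u_0=\nabla l$ reduction, then integrate in $t$, discard the $S_1$ and interface fluxes using (\ref{Hipo_Omega}) and (\ref{Mono_coef}), and finally apply Lemma~\ref{Lemma_LP} so that its $[\int_0^T p]^2$ contribution cancels the reserved term from (\ref{Segundo_Termo}), leaving (\ref{Ultima_Estim}). The emphasis you place on the interface step (\ref{Aux_Interf}) as the place where the monotonicity hypothesis is genuinely consumed matches the paper's logic exactly.
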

The same manner, we obtain the inequality of observability for the system (\ref{Par_Ecu_Ondas_ST2})-(\ref{Par_Ecu_Ondas_Multilayer_2}) with their interface conditions and the monotonicity of the coefficients, given by:
\begin{align}{\label{Mono_2}}
\gamma^{k-1} &\leq  \gamma^{k}\\\nonumber
\tau^{k-1} &\leq  \tau^{k}
\end{align}
\begin{theorem}{\label{Desig_Obser_Sist_2}}
Assuming $\Phi$ as in (\ref{Funcao_Phi}), the monotonicity for the coefficients (\ref{Mono_2}) and the hypothesis of the theorem  \ref{Desig_Obser_Sist_1} with $h(x)=\frac{1}{2}\mid x-x_0\mid^2 + \delta_0\Phi(x)$ and  $(v_0,q_0)\in V_2\cap D(A_2)$, $v^{k}_{0}=\nabla m^k$, with $m^k\in H^2(\Omega_k)$, $m=0$ in  $S_1$. Then, there is a constant $C_6>0$, with independence of $t,v_0,q_0$ such that 
\[
\begin{aligned}
	& T\left[ 1-\delta_0(1-\mu(\Omega))\right]\sum_{k=0}^{m}\int_{\Omega_k}
		\left[\tau^k \mid v^k \mid^2 + \gamma^k(q^k)^2 \right]dx - 2C_6 \sum_{k=0}^{m}\int_{\Omega_k}\left[\tau^k \mid v^k \mid^2 + \gamma^k(q^k)^2 \right]dx\\
	&\leq  2 \int_{0}^{T}\int_{S_0}\tau\mid v.\eta \mid^2 \frac{\partial h}{\partial\eta}dS_0 dt.
\end{aligned}
\]
\end{theorem}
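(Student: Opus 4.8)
The plan is to mirror, line for line, the argument that produced Theorem~\ref{Desig_Obser_Sist_1}, replacing the pair $(u,p)$ and the coefficients $(\alpha^k,\beta^k)$ by $(v,q)$ and $(\gamma^k,\tau^k)$, and using the interface conditions (\ref{Cond_Multilayer_2}) and the monotonicity (\ref{Mono_2}) in place of (\ref{Cond_Multilayer_1}) and (\ref{Mono_coef}). First I would introduce the multipliers adapted to (\ref{Par_Ecu_Ondas_ST2}),
\[
M_1 = 2\left(\gamma t q - v\cdot\nabla h + \gamma\int_0^t q(x,s)\,ds\right),\quad M_2 = 2\left(\tau t v - q\nabla h\right),\quad M_3 = 2\tau v,
\]
and form the identity $0 = M_1\{q_t+\tau\,\mbox{div}(v)\}+M_2\cdot\{v_t+\gamma\nabla q\}+M_3\cdot\{\int_0^t(v_s+\gamma\nabla q)\,ds\}$. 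With the same auxiliary function $h(x)=\tfrac12\mid x-x_0\mid^2+\delta_0\Phi(x)$ from (\ref{Fun_h}) this rewrites as a balance law $0=\partial_t A-\mbox{div}(\vec{B})-J$, whose densities $A,\vec{B},J$ are the exact analogues of those following (\ref{Eq_Control}). Integrating over $\Omega_k\times(0,s)$ and summing in $k$ produces the counterpart of (\ref{Estim_Varios}).

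Next I would estimate the six resulting terms exactly as before. The first and fourth are controlled by Hölder, producing a constant in terms of $\max_k\{(\gamma^k)^{-1},(\tau^k)^{-1}\}$ and $\max_\Omega\mid\nabla h\mid$; the second is rewritten as $2\sum_k\int_{\Omega_k}\gamma^k q^k\int_0^s q^k = -\partial_s\sum_k\int_{\Omega_k}\gamma^k[\int_0^s q^k]^2$; the sixth is bounded by the $J$-estimate, which is insensitive to which field is inserted and again yields the factor $\delta_0(1-\mu(\Omega))$. The term $2\sum_k\int_{\Omega_k}\tau^k v_0^k\cdot\int_0^s v^k$ is handled by invoking the hypothesis $v_0^k=\nabla m^k$ with $m^k=0$ on $S_1$: integrating by parts, converting the volume term to $\partial_t q$ and collapsing the $\Gamma_k$ contributions to zero via the interface conditions, precisely as in the lemma preceding (\ref{Estim_termo3_2}). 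The remaining pressure-type estimate is the analogue of Lemma~\ref{Lemma_LP}, delivering constants $C_3',C_4'$ and hence the single constant $C_6$.

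The one genuinely different step is the boundary analysis of $\vec{B}\cdot\eta$, and this is where the duality of the boundary conditions must be used carefully. For the homogeneous (adjoint) version of (\ref{Par_Ecu_Ondas_ST2}) one has $q=0$ on the whole of $\partial\Omega=S_0\cup S_1$, together with the tangency property $v\times\eta=0$ on the boundary from item 3 of Lemma~\ref{Prop_A_j}. Setting $q=0$ in $\vec{B}\cdot\eta$ therefore leaves only $-\tau\mid v\mid^2\frac{\partial h}{\partial\eta}+2\tau(v\cdot\nabla h)(v\cdot\eta)$, and the identity $2(v\cdot\nabla h)(v\cdot\eta)=2\mid v\mid^2\frac{\partial h}{\partial\eta}$ valid when $v\times\eta=0$ collapses this to $\tau\mid v\cdot\eta\mid^2\frac{\partial h}{\partial\eta}$ on both $S_0$ and $S_1$; here the surviving observed quantity is the normal velocity $\mid v\cdot\eta\mid^2$ rather than $q^2$, which is exactly the obstacle to watch. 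On $S_1$ the sign $\frac{\partial h}{\partial\eta}\le 0$ from (\ref{Hipo_S1}) sends this contribution to the favourable side, while on $S_0$ it becomes the observation term on the right-hand side. For the interfaces I would prove the exact analogue of Lemma~\ref{Lema_Interfaces} and then combine (\ref{Mono_2}) with the fourth line of (\ref{Hipo_Omega}) to obtain $\sum_k\int_0^T\int_0^s\int_{\Gamma_k}[\vec{B}^{k-1}\cdot\eta-\vec{B}^k\cdot\eta]\le 0$. Assembling all the estimates, integrating over $(0,T)$, using the time-independence of $E_2$, and absorbing the lower-order terms into the left-hand side (which is what forces the threshold on $T$ through $C_6$) then yields the stated inequality.
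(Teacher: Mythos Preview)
Your proposal is correct and follows essentially the same route as the paper, whose own proof consists of a single sentence referring back to the estimations of Theorem~\ref{Desig_Obser_Sist_1} (and to \cite{Perla}). You have in fact supplied more detail than the paper does, and you correctly isolate the only genuine modification: because $q=0$ on all of $\partial\Omega$ for the second system, the boundary flux $\vec{B}\cdot\eta$ reduces---via $v\times\eta=0$ from Lemma~\ref{Prop_A_j}(3)---to $\tau\mid v\cdot\eta\mid^{2}\frac{\partial h}{\partial\eta}$, which explains why the observed quantity on $S_0$ is $\mid v\cdot\eta\mid^{2}$ rather than $q^{2}$.
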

\begin{proof}
The proof was obtained using the results in \cite{Perla} and the estimations that were made  in the proof of the theorem \ref{Desig_Obser_Sist_1}
\end{proof}
Assuming the hypothesis of the theorems \ref{Desig_Obser_Sist_1} and  \ref{Desig_Obser_Sist_2}, we obtain the inequalities of the observability:
\begin{equation}{\label{Casi_Est_Final}}
\begin{aligned}
& (T-T_0)\sum_{k=0}^{m}\int_{\Omega_k}\left[ \beta^k \mid u^k \mid^2 + \alpha^k(p^k)^2 + \tau^k \mid v^k \mid^2 + \gamma^k(q^k)^2 \right]dx \\
&\leq C_7\int_{0}^{T}\int_{S_0}\left[ \alpha p^2+ \tau\mid v.\eta \mid^2 \right]\frac{\partial h}{\partial\eta}dS_0 dt.
\end{aligned}
\end{equation}
For any $T\geq T_0=\max\left\{ 1,\frac{C_5+C_6}{1-\delta_0(1-\mu(\Omega))}  \right\}$.
That was made in \cite{Perla} and  \cite{Kapitonov}, (\ref{Casi_Est_Final}). This is an inequality of observability, moreover, it is not convenient to use the H.U.M technique. Starting of  (\ref{Casi_Est_Final}), we obtain an appropriate inequality.
\begin{theorem}{\label{Desigualdade_Final}}
Assuming the hypothesis of the theorem  \ref{Desig_Obser_Sist_1},   \ref{Desig_Obser_Sist_2}. Moreover, we suppose  that :
\begin{equation}{\label{Aux_Theo_Final}}
	\begin{aligned}
	\alpha^k\beta^k &= \gamma^k\tau^k\\
	\beta^{k-1}\tau^k	&= \beta^k\tau^{k-1}.
	\end{aligned}
\end{equation}
Then, There is a positive constant $C>0$ such that
\[
	\begin{aligned}
		&(T-T_0)\sum_{k=0}^{m}\int_{\Omega_k}\left[ \beta^k \mid u^k \mid^2 + \alpha^k(p^k)^2 + 		\tau^k \mid v^k \mid^2 + \gamma^k(q^k)^2 \right]dx \\
		& \leq C\int_{0}^{T}\int_{S_0}\left[ \alpha p- \tau( v.\eta) \right]^2\frac{\partial h}{\partial\eta}dS_0 dt.
	\end{aligned}
\]
$\forall T>\max\left\{ 1, \frac{C_5+C_6}{1-\delta_0(1-\mu(\Omega))} \right\}$.
\end{theorem}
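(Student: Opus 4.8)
The plan is to take the combined observability inequality (\ref{Casi_Est_Final}) as the starting point and upgrade its right-hand side, which is the indefinite-free sum $\int_{S_0}[\alpha p^2+\tau(v\cdot\eta)^2]\frac{\partial h}{\partial\eta}$, into the single ``difference'' observation $\int_{S_0}[\alpha p-\tau(v\cdot\eta)]^2\frac{\partial h}{\partial\eta}$. Writing $E:=\sum_{k=0}^{m}\int_{\Omega_k}\left[\beta^k|u^k|^2+\alpha^k(p^k)^2+\tau^k|v^k|^2+\gamma^k(q^k)^2\right]dx$ for the (conserved) total energy and expanding the target integrand,
\[
[\alpha p-\tau(v\cdot\eta)]^2=\alpha^2 p^2+\tau^2(v\cdot\eta)^2-2\alpha\tau\,p\,(v\cdot\eta),
\]
the pure squares already dominate the right-hand side of (\ref{Casi_Est_Final}): since $\alpha^2 p^2\geq(\min_k\alpha^k)\,\alpha p^2$ and $\tau^2(v\cdot\eta)^2\geq(\min_k\tau^k)\,\tau(v\cdot\eta)^2$, one has $\int_{S_0}[\alpha^2p^2+\tau^2(v\cdot\eta)^2]\frac{\partial h}{\partial\eta}\geq c\int_{S_0}[\alpha p^2+\tau(v\cdot\eta)^2]\frac{\partial h}{\partial\eta}$ with $c=\min_k\{\alpha^k,\tau^k\}$. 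The entire difficulty is thus concentrated in the \emph{indefinite} cross term $\int_0^T\int_{S_0}\alpha\tau\,p\,(v\cdot\eta)\frac{\partial h}{\partial\eta}\,dS_0\,dt$. I emphasize that no pointwise bound $\alpha p^2+\tau(v\cdot\eta)^2\leq C[\alpha p-\tau(v\cdot\eta)]^2$ can hold, since the right side degenerates on the line $\alpha p=\tau(v\cdot\eta)$; the cross term must be controlled by a global, time-integrated identity and then absorbed.

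The key device is a reciprocity identity between the two systems, and this is exactly where the hypotheses (\ref{Aux_Theo_Final}) enter. I would introduce the cross-energy
\[
\mathcal{E}(t)=\sum_{k=0}^{m}\int_{\Omega_k}\left[\tau^k\,u^k\cdot v^k+\alpha^k\,p^k q^k\right]dx,
\]
differentiate in $t$, substitute the four evolution equations of (\ref{Par_Ecu_Ondas_Multilayer_1}) and (\ref{Par_Ecu_Ondas_Multilayer_2}), and integrate by parts. The interior terms collapse into multiples of $\int_{\Omega_k}q\,\mbox{div}(u)$ and $\int_{\Omega_k}p\,\mbox{div}(v)$; with the weight choice made above their coefficients reduce to $\tau^k\gamma^k-\alpha^k\beta^k$, which vanishes precisely by the first hypothesis $\alpha^k\beta^k=\gamma^k\tau^k$. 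The interface contributions on each $\Gamma_k$, after inserting the transmission conditions (\ref{Cond_Multilayer_1}) and (\ref{Cond_Multilayer_2}), telescope because the residual mismatch is proportional to $\tfrac{\tau^{k-1}}{\beta^{k-1}}-\tfrac{\tau^{k}}{\beta^{k}}$, which is zero precisely by the second hypothesis $\beta^{k-1}\tau^k=\beta^k\tau^{k-1}$. On $S_1$ both $p$ and $q$ vanish, and on $S_0$ the piece carrying $u\cdot\eta$ dies because $u\cdot\eta=0$ there. What survives is the clean identity $\frac{d}{dt}\mathcal{E}(t)=-\int_{S_0}\alpha\tau\,p\,(v\cdot\eta)\,dS_0$, whose integration over $(0,T)$ together with $|\mathcal{E}(t)|\leq C\,E$ (Cauchy--Schwarz plus boundedness of coefficients and conservation of energy) bounds the \emph{unweighted} cross term by $C\,E$, uniformly in $T$.

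The main obstacle is that this identity controls the unweighted cross term, whereas the expansion produces the \emph{weighted} quantity $\int_0^T\int_{S_0}\alpha\tau\,p(v\cdot\eta)\frac{\partial h}{\partial\eta}$. I would therefore repeat the pairing in bilinear form using the $\nabla h$-weighted multipliers already present in the observability proof (the spatial, time-independent parts of $M_1,M_2$ giving the flux $\vec B$ in (\ref{Eq_Control})), pairing the equations of one system against the $\nabla h$-multipliers built from the other. Conditions (\ref{Aux_Theo_Final}) again force cancellation of all interior and interface terms, and what remains on $S_0$ is exactly $\int_{S_0}p(v\cdot\eta)\frac{\partial h}{\partial\eta}$, equal to endpoint-in-time values of a weighted cross quantity that is $O(E)$, together with a bulk cross remainder carrying the small factor $\delta_0$ (just as the term $J$ did in the observability estimate), which is absorbed into $(T-T_0)E$ by taking $\delta_0$ small. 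Checking that the weighted bilinear identity closes cleanly, so that the weighted cross term is genuinely bounded uniformly in $T$, is the delicate technical heart of the argument.

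With the weighted cross term bounded by $C_9\,E$ uniformly in $T$, the conclusion is immediate. From (\ref{Casi_Est_Final}) and the lower bound on the pure squares,
\[
(T-T_0)\,E\leq C_7\int_0^T\int_{S_0}[\alpha p^2+\tau(v\cdot\eta)^2]\frac{\partial h}{\partial\eta}\leq \frac{C_7}{c}\int_0^T\int_{S_0}[\alpha^2 p^2+\tau^2(v\cdot\eta)^2]\frac{\partial h}{\partial\eta},
\]
and rewriting the last integrand as $[\alpha p-\tau(v\cdot\eta)]^2+2\alpha\tau\,p(v\cdot\eta)$ yields
\[
(T-T_0)\,E\leq \frac{C_7}{c}\int_0^T\int_{S_0}[\alpha p-\tau(v\cdot\eta)]^2\frac{\partial h}{\partial\eta}+\frac{2C_7C_9}{c}\,E.
\]
Absorbing the last term by enlarging the threshold on $T$ gives, for every $T>\max\left\{1,\frac{C_5+C_6}{1-\delta_0(1-\mu(\Omega))}\right\}$ suitably increased, the asserted inequality with a constant $C>0$ independent of the data.
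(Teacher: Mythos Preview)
Your core device is exactly the paper's: differentiate the cross-energy $\mathcal{E}(t)=\sum_k\int_{\Omega_k}[\tau^k u^k\cdot v^k+\alpha^k p^k q^k]\,dx$, use $\alpha^k\beta^k=\gamma^k\tau^k$ to kill the interior terms and $\beta^{k-1}\tau^k=\beta^k\tau^{k-1}$ to kill the interface contributions, and obtain $\frac{d}{dt}\mathcal{E}(t)=-\int_{S_0}\alpha\tau\,p\,(v\cdot\eta)\,dS_0$. You then bound $|\mathcal{E}(t)|\le C\,E$ and absorb. All of this matches the paper line for line.

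Where you diverge is in worrying about the weight $\frac{\partial h}{\partial\eta}$. You are right that the identity above yields only the \emph{unweighted} cross term, and you propose building a second, $\nabla h$-weighted bilinear identity to capture the weighted one. The paper does not do this, and it is not needed. Since $h\in C^1(\bar\Omega)$ and $S_0$ is compact, $\frac{\partial h}{\partial\eta}$ is bounded there (and nonnegative by (\ref{Hipo_S0})); the paper simply estimates $\int_{S_0}[\alpha p^2+\tau(v\cdot\eta)^2]\frac{\partial h}{\partial\eta}\le C_8\int_{S_0}[\alpha^2 p^2+\tau^2(v\cdot\eta)^2]$ (cf.\ (\ref{Aux_Fin_F_F})), then substitutes the \emph{unweighted} algebraic identity $\alpha^2 p^2+\tau^2(v\cdot\eta)^2=[\alpha p-\tau(v\cdot\eta)]^2+2\alpha\tau\,p(v\cdot\eta)$ together with the integrated cross-energy relation. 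This gives (\ref{Casi_Est_Final_3}) directly, and the endpoint bound $|\mathcal{E}|\le C_9 E$ finishes. Your weighted bilinear identity, which you yourself flag as the ``delicate technical heart,'' is thus an unnecessary detour; the paper's route avoids it entirely at the cost of one $L^\infty$ bound on the weight. (Indeed, the right-hand side the paper actually obtains in (\ref{Casi_Est_Final_3}) is the unweighted integral, consistent with the norm (\ref{Formula_PI}) used downstream.)
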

\begin{proof}
Using the theorems (\ref{Par_Ecu_Ondas_Multilayer_2}) and  (\ref{Aux_Theo_Final}), we have  that
\begin{equation}{\label{Aux_Theo_Final_2}}
	\begin{aligned}
		\frac{d}{dt}\int_{\Omega}(\tau u.v+\alpha p q)dx &= \frac{d}{dt}\sum_{k=0}^{m}					 \int_{\Omega_k}(\tau^{k} u^{k}.v^{k}+\alpha^{k} p^{k} q^{k})dx 
		= \sum_{k=0}^{m}\int_{\Omega_k}\left\{ \tau^k\frac{\partial u^k}{\partial t}.v^k + 			     \tau^k u^k.\frac{\partial v^k}{\partial t} +\right. \\
		& \left. \alpha^k \frac{\partial p^k}{\partial t}q^k + \alpha^k p^k \frac{\partial q^k}				   {\partial t} \right\} \\
		&= \sum_{k=0}^{m}\int_{\Omega_k}\left\{ -\alpha^k\tau^k\nabla p^k.v^k - \tau^k\gamma^k			   u^k\nabla q^k -\alpha^k\beta^k\mbox{div}(u^k) q^k - \alpha^k\tau^k p^k \mbox{div}			  (v^k)\right\}\\
		&= -\sum_{k=0}^{m}\int_{\partial\Omega_k}\alpha^k \tau^k p^k(v^k.\eta)+ \sum_{k=0}^{m}			    \int_{\Omega_k}\left\{ \alpha^k\tau^k p^k\mbox{div}(v^k)- \tau^k\gamma^k	u^k					\nabla q^k -\right. \\
   &\left. \alpha^k\beta^k\mbox{div}(u^k)q^k -\alpha^k\tau^k p^k \mbox{div}(v^k)\right\}dx \\
	&= -\sum_{k=0}^{m}\int_{\partial\Omega_k}\alpha^k \tau^k p^k(v^k.\eta)-\sum_{k=0}^{m}			\int_{\partial\Omega_k}\tau^k\gamma^k q^k(u^k.\eta)+\\
	&  \sum_{k=0}^{m}\int_{\partial\Omega_k}\left\{  \gamma^k\tau^k\mbox{div}(u^k)q^k - 			\alpha^k\beta^k\mbox{div}(u^k)q^k  \right\}\\
	&= -\sum_{k=0}^{m}\int_{\partial\Omega_k}\alpha^k \tau^k p^k(v^k.\eta)-\sum_{k=0}^{m}			\int_{\partial\Omega_k}\tau^k\gamma^k q^k(u^k.\eta)\\
	&= -\int_{S_1}\alpha\tau p(v.\eta)-\sum_{k=0}^{m}\int_{\Gamma_k}\left[ \alpha^{k-1} 			   \tau^{k-1}p^{k-1}(v^{k-1}.\eta) - \alpha^k \tau^k p^k(v^k.\eta)  \right] - \\
	& \int_{S_0}\alpha\tau p(v.\eta) - \int_{S_1}\tau\gamma q(u.\eta) - \sum_{k=0}^{m}					\int_{\Gamma_k}\left[ \tau^{k-1}\gamma^{k-1}q^{k-1}(u^{k-1}.\eta) - \tau^{k}					\gamma^{k}q^{k}(u^{k}.\eta) \right] - \\
	&\int_{S_0}\tau\gamma q(u.\eta)
	\end{aligned}
\end{equation}
and, using the boundary condition and (\ref{Aux_Theo_Final}) in the interfaces, we have that 
\begin{equation}{\label{Aux_TF}}
\frac{d}{dt}\sum_{k=0}^{m}\int_{\Omega_k}(\tau^{k} u^{k}.v^{k}+\alpha^{k} p^{k} q^{k})dx = -\int_{S_0}\alpha\tau p(v.\eta)dS_0
\end{equation}
This is clearly that the first, second, fourth and sixth term of (\ref{Aux_Theo_Final_2}) are null only substituting directly the interface conditions. The fifth term is made using (\ref{Aux_Theo_Final})
\begin{equation}{\label{Aux_TF_1}}
	\begin{aligned}
		\sum_{k=0}^{m}\int_{\Gamma_k}\left[ \tau^{k-1}\gamma^{k-1}q^{k-1}(u^{k-1}.\eta) - 				\tau^{k}\gamma^{k}q^{k}(u^{k}.\eta) \right]	&= 
		\sum_{k=0}^{m}\int_{\Gamma_k}\left[ \frac{\tau^{k-1}}{\beta^{k-1}}\beta^{k-1}					\gamma^{k-1}q^{k-1}(u^{k-1}.\eta) -\right.\\
		& \left.\frac{\tau^{k}}{\beta^{k}}\beta^{k}\gamma^{k}q^{k}(u^{k}.\eta) \right]\\
		& =	\sum_{k=0}^{m}\int_{\Gamma_k}\left( \frac{\tau^{k-1}}{\beta^{k-1}} - 						    \frac{\tau^{k}}{\beta^{k}} \right)\gamma^k q^k\beta^k(u^k.\eta)\\
		& = \sum_{k=0}^{m}\int_{\Gamma_k}\frac{\gamma^{k}}{\beta^{k-1}}(\beta^{k-1}\tau^{k-1}-			  \beta^{k-1}\tau^k)q^{k}(u^k.\eta) \\
		&=0
	\end{aligned}
\end{equation}
and ,
\begin{equation}{\label{Aux_Front}}
	\int_{0}^{T}\int_{S_0}\left[   \alpha p -\tau(v.\eta)  \right]^2 = \int_{0}^{T}\int_{S_0}		 \alpha^2 p^2 + \int_{0}^{T}\int_{S_0}\tau^2(v.\eta)^2 - 2\tau\alpha\int_{0}^{T}\int_{S_0} 		p(v.\eta)
\end{equation}
Integrating (\ref{Aux_TF}) in $(0,T)$ and substituting in (\ref{Aux_Front}), we have that
\[
\int_{0}^{T}\int_{S_0}\left[ \alpha p +\tau(v.\eta) \right]^2 = \int_{0}^{T}\int_{S_0}\alpha^2 p^2 + \int_{0}^{T}\int_{S_0}\tau^2 (v.\eta)^2 + \sum_{k=0}^{m}\int_{\Omega_k}(\tau^k u^k.v^k + \alpha^k p^k q^k)\mid_{0}^{T}
\]
Now, we have that 
\begin{equation}{\label{Aux_Fin_F}}
	\int_{0}^{T}\int_{S_0}\left( \alpha^2 p^2 +\tau^2 (v.\eta)^2  \right) = \int_{0}^{T}			\int_{S_0}\left[  \alpha p - \tau(v.\eta)  \right]^2 - \sum_{k=0}^{m}\int_{\Omega_k}(\tau^k 	u^k.v^k + \alpha^k p^k q^k)\mid_{0}^{T}
\end{equation}
using (\ref{Aux_Fin_F}) to estimate the term of right-hand side of the inequality (\ref{Casi_Est_Final}) 
\begin{equation}{\label{Aux_Fin_F_F}}
	\begin{aligned}
		\int_{0}^{T}\int_{S_0}\left[ \alpha p^2 + \tau(v.\eta)^2 \right] &\leq \max\{ 					 (\alpha^{-1}),(\tau^{-1}) \}\int_{0}^{T}\int_{S_0}\left[   \alpha^2 p^2 +\tau^2 (v.			 \eta)^2  \right] \\
		& \leq C_8\int_{0}^{T}\int_{S_0}\left[  \alpha p -\tau(v.\eta)  \right]^2 -						  C_8\sum_{k=0}^{m}\int_{\Omega_k}(\tau^k u^k.v^k + \alpha^k p^k q^k)\mid_{0}^{T}
	\end{aligned}		
\end{equation}
Substituting in (\ref{Casi_Est_Final}), we have that
\begin{equation}{\label{Casi_Est_Final_3}}
\begin{aligned}
& (T-T_0)\sum_{k=0}^{m}\int_{\Omega_k}\left[ \beta^k \mid u^k \mid^2 + \alpha^k(p^k)^2 + \tau^k \mid v^k \mid^2 + \gamma^k(q^k)^2 \right]dx + C_8\sum_{k=0}^{m}\int_{\Omega_k}(\tau^k u^k.v^k + \alpha^k p^k q^k)\mid_{0}^{T} \\
& \leq C_7C_8\int_{0}^{T}\int_{S_0}\left[  \alpha p -\tau(v.\eta)  \right]^2
\end{aligned}
\end{equation}
and the proof of the theorem follows the inequalities:
\begin{equation}{\label{Aux_Casi_F}}
\begin{aligned}
\sum_{k=0}^{m}\int_{\Omega_k}(\tau^k u^k.v^k + \alpha^k p^k q^k)\mid_{0}^{T} \leq C_9\left[  \beta^k\mid u^k \mid^2 + \alpha^k(p^k)^2 +\tau^k\mid v^k \mid^2 +\gamma^k(q^k)^2  \right]dx
\end{aligned}
\end{equation}
where $C_9=\max\left\{ \tau^{k}(\beta^k)^{-1}, \alpha^k(\tau^k)^{-1}   \right\}$
\end{proof}
As a corollary , of uniqueness of the theorem  \ref{Desigualdade_Final}, we have that:
\begin{corollary}{\label{Produto_Interno_D}}
with the hypothesis of the theorem \ref{Desigualdade_Final}, given $\{u,p\}$ and  $\{v,q\}$ solutions of the problem (\ref{Par_Ecu_Ondas_Multilayer_1}) and  (\ref{Par_Ecu_Ondas_ST2}), respectively. Making that  
\[\alpha p(x,t)=\tau v(x,t).\eta\quad\quad\forall (x,t)\in\Gamma_0 \times (0,T)\]
\end{corollary}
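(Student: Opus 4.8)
The plan is to read the conclusion directly off the observability estimate of Theorem~\ref{Desigualdade_Final}; that estimate was built precisely so that this uniqueness statement becomes a one-line consequence. First I would translate the hypothesis: assuming $\alpha p(x,t)=\tau\,v(x,t)\cdot\eta$ on $\Gamma_0\times(0,T)$ (with $\Gamma_0$ the outer boundary $S_0$ on which the observation lives) means that the integrand $[\alpha p-\tau(v\cdot\eta)]^{2}$ is identically zero there. Hence the whole right-hand side of Theorem~\ref{Desigualdade_Final},
\[
C\int_{0}^{T}\int_{S_0}\left[\alpha p-\tau(v\cdot\eta)\right]^{2}\frac{\partial h}{\partial\eta}\,dS_0\,dt,
\]
vanishes; note that the sign of $\frac{\partial h}{\partial\eta}$ plays no role here, since it multiplies a factor that is already null.

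Next I would exploit positivity on the left. Because $T>T_0=\max\{1,(C_5+C_6)/(1-\delta_0(1-\mu(\Omega)))\}$ the factor $(T-T_0)$ is strictly positive, and because every coefficient $\beta^k,\alpha^k,\tau^k,\gamma^k$ is strictly positive, the quantity
\[
\sum_{k=0}^{m}\int_{\Omega_k}\left[\beta^k|u^k|^2+\alpha^k(p^k)^2+\tau^k|v^k|^2+\gamma^k(q^k)^2\right]dx
\]
is nonnegative yet bounded above by zero, so it equals zero and each summand vanishes pointwise. Since this sum is twice the conserved total energy $E(t)$ of (\ref{Energia_Total_P1}) --- shown time-independent via (\ref{Ener_1})--(\ref{Ener_2}) --- its vanishing at the relevant instant propagates to every $t$, and therefore $(u,p)\equiv 0$ and $(v,q)\equiv 0$ on $\Omega\times(0,T)$. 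This is the uniqueness asserted by the corollary.

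For context, this is exactly the injectivity needed to launch H.U.M.: it says the trace map $(u_0,p_0,v_0,q_0)\mapsto\bigl(\alpha p-\tau(v\cdot\eta)\bigr)\bigl|_{S_0\times(0,T)}$ has trivial kernel on $(V_1\cap D(A_1))\times(V_2\cap D(A_2))$, so that the bilinear form $\int_{0}^{T}\int_{S_0}[\alpha p-\tau(v\cdot\eta)][\alpha\tilde p-\tau(\tilde v\cdot\eta)]\frac{\partial h}{\partial\eta}\,dS_0\,dt$ is positive definite and defines the inner product naming the corollary. There is no genuine obstacle in the computation itself; the only points to verify are the two standing reductions --- identifying $\Gamma_0$ with the observation boundary $S_0$, and using the conservation law to pass from ``energy zero at one time'' to ``solution trivial for all time.'' All the analytic difficulty was already absorbed into proving Theorem~\ref{Desigualdade_Final}.
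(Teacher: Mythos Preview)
Your argument is correct and matches the paper's approach: the paper gives no explicit proof at all, simply announcing the statement ``as a corollary, of uniqueness of the theorem~\ref{Desigualdade_Final}'', which is precisely the one-line deduction you spell out---vanishing right-hand side in the observability inequality forces the (conserved, positive-definite) energy to be zero, hence the solutions are trivial. Your identification of $\Gamma_0$ with $S_0$ and your appeal to energy conservation are the expected ingredients.
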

then, in $T>T_0$, $u=v=0$ and  $p=q=0$ for all $(x,t)\in\Omega\times (0,T)$

\section{Exact controllability}

As a consequence of the corollary \ref{Produto_Interno_D} , we have that: for  $T>T_0$, the expression  
\begin{equation}{\label{Formula_PI}}
\left[  \int_{0}^{T}\int_{\Gamma_0}\left[  \alpha p-\tau (v.\eta)  \right]  \right]^{1/2}
\end{equation}
define a norm in a space of initial data $(u_0,p_0)$ and  $(v_0,q_0)$ the problems (\ref{Par_Ecu_Ondas_Multilayer_1}) and  (\ref{Par_Ecu_Ondas_ST2}). We denote by $Y$ the Hilbert space defined as closure of $V_1\cap D(A_1)\times V_1\cap D(A_1)$ in  $X=X_1\times X_2$ with the norm (\ref{Formula_PI}). The obtained number in (\ref{Formula_PI}) is denoted by  $\parallel(u_0,p_0,v_0,v_0,q_0)\parallel_{Y}$.
Now,  $Y\subset X$ and  
\begin{equation}{\label{Compara_Normas}}
\begin{aligned}
\parallel (u_0,p_0,v_0,q_0) \parallel^{2}_{X} &= \parallel (u_0,p_0) \parallel^{2}_{X_1} + \parallel (v_0,q_0) \parallel^{2}_{X_2} \\
& \leq C \parallel (u_0,p_0,v_0,q_0) \parallel^{2}_{Y}
\end{aligned}
\end{equation}
for some positive constant $C$.
The dual space  of $Y$  respect to  $X$ is denoted by $Y^{'}$. In  $\Omega\times (0,T)$ consider the systems (\ref{Par_Ecu_Ondas_Multilayer_1}) and (\ref{Par_Ecu_Ondas_Multilayer_2}) with initial condition $(u_0,p_0,v_0,q_0)\in Y^{'}$.  Using the transportation method, the solution to the problems (\ref{Par_Ecu_Ondas_Multilayer_1}) and (\ref{Par_Ecu_Ondas_Multilayer_2}) with no homogeneous contour conditions.
\begin{definition}
Given  $(u(x,t),p(x,t),v(x,t),q(x,t))\in C(O,T; Y^{'})$, is a solution of  (\ref{Par_Ecu_Ondas_Multilayer_1}) and  (\ref{Par_Ecu_Ondas_Multilayer_2}), if
\begin{equation}{\label{Dualidad_X}}
\left< (u,p,v,q),(\tilde{u},\tilde{p},\tilde{v},\tilde{q})  \right>_{X} = \left< (u_0,p_0,v_0,q_0),(\tilde{u}_0,\tilde{p}_0,\tilde{v}_0,\tilde{q}_0)  \right>_{X} - \int_{0}^{T}\int_{\Gamma_0}\left[ \alpha\beta Q \tilde{p} + \gamma\tau P (\tilde{n}.\eta)  \right]d\Gamma_0 ds
\end{equation}
for all  $(\tilde{u}_0,\tilde{p}_0,\tilde{v}_0,\tilde{q}_0)\in Y$ and  $0<t<T$. Here $(\tilde{u}_0,\tilde{p}_0)$ and  $(\tilde{v}_0,\tilde{q}_0)$ are the solutions of (\ref{Par_Ecu_Ondas_Multilayer_1}) and  (\ref{Par_Ecu_Ondas_Multilayer_2}), respectively, for the functions $P, Q\in C(0,T;L^{2}(\Gamma_0))$.
In (\ref{Dualidad_X}) is given by 
\[
\left< (u,p,v,q),(\tilde{u},\tilde{p},\tilde{v},\tilde{q})  \right>_{X}= \left< (u,p),(\tilde{u},\tilde{p}\right>_{X_1} + \left< (v,q),(\tilde{v},\tilde{q})  \right>_{X_2}
\]
\end{definition}
\begin{definition}{\label{Cond_0}}
A solution of (\ref{Par_Ecu_Ondas_Multilayer_1}) and (\ref{Par_Ecu_Ondas_Multilayer_2}); This is null in the time $t=T$. The function  $(u(x,t),p(x,t),v(x,t),q(x,t))(\ref{Par_Ecu_Ondas_Multilayer_1})$ and $(\ref{Par_Ecu_Ondas_Multilayer_2})in C(0,T;Y^{'})$ such that  
\begin{equation}{\label{Eq_C0}}
\left< (u,p,v,q),(\tilde{u},\tilde{p},\tilde{v},\tilde{q})  \right>_{X}= \int_{0}^{T}\int_{\Gamma_0}\left[ \alpha\beta Q \tilde{p} + \gamma\tau P (\tilde{n}.\eta)  \right]d\Gamma_0 ds
\end{equation}
for all $(\tilde{u},\tilde{p},\tilde{v},\tilde{q})\in Y$ and $0<t<T$
\end{definition}
Given the lineal and reversible systems (\ref{Par_Ecu_Ondas_Multilayer_1}) and (\ref{Par_Ecu_Ondas_Multilayer_2}) in the time; it is clearly to solve the problem of exact controllability, it is sufficient to prove that, for all initial condition in $Y^{'}$, and their solutions, it can be take in the equilibrium of time $T$.\\
Given $G_1=(w_0,k_0)$ and $G_2=(m_0,l_0)$ arbitrary elements of $Y$. We denote by 
\[
\begin{aligned}
&(w(x,t),k(x,t))=U_1(t)(w_0,k_0)\\
&(m(x,t),l(x,t))=U_2(t)(m_0,l_0)
\end{aligned}
\]
consider the following functions
\begin{equation}{\label{Funcoes_Bordo}}
\begin{aligned}
&Q=\beta\left(  \alpha k(x,t)-\tau m(x,t).\eta \right)\\
&P=-\frac{\beta}{\gamma}Q
\end{aligned}
\end{equation}
and given $(u,p)$ and  $(v,q)$ the solution of (\ref{Par_Ecu_Ondas_Multilayer_1}) and  (\ref{Par_Ecu_Ondas_Multilayer_2}), these are null in the instant $T$, $(T>T_0)$ and the contour conditions (\ref{Funcoes_Bordo}).\\
Considering the map 
\[
\begin{aligned}
\wedge:\quad &Y\longrightarrow Y^{'}\\
&(G_1,G_2)\longrightarrow \wedge(G_1,G_2)=(u,p,v,q)\mid_{t=0}
\end{aligned}
\]
Using (\ref{Eq_C0}) in $t=0$ and substituting $P$ and  $Q$ given by (\ref{Funcoes_Bordo}), we have
\begin{equation}{\label{Isomorfismo_F}}
\begin{aligned}
\left<\wedge(G_1,G_2),(\tilde{u}_0,\tilde{p}_0,\tilde{v}_0,\tilde{q}_0)  \right>_{X} &= \int_{0}^{T}\int_{\Gamma_0}\left[ \alpha \beta Q \tilde{p} + \gamma\tau P \tilde{v}.\eta\right]d\Gamma_0 ds \\
&= \int_{0}^{T}\int_{\Gamma_0}(\gamma k-\tau m.\eta)(\alpha\tilde{p}-\tau \tilde{v}.\eta)d\Gamma_0 ds \\
&= \left<(G_1,G_2),(\tilde{u}_0,\tilde{p}_0,\tilde{v}_0,\tilde{q}_0)  \right>_{Y}
\end{aligned}
\end{equation}
of (\ref{Isomorfismo_F}), we can conclude that  $\wedge$ is an isomorphism of  $Y$ in $Y^{'}$. Putting 
\begin{equation}{\label{Funcoes_Bordo_Final}}
\begin{aligned}
(G_1,G_2) &= \wedge^{-1}((u_0,p_0),(v_0,q_0)) \\
Q &=\beta^{-1}\left(  \alpha k(x,t)-\tau m(x,t).\eta \right)\\
P &=-\frac{\beta}{\gamma}Q
\end{aligned}
\end{equation}
Using (\ref{Dualidad_X}) with  $t=T>T_0$, we have that	 
\[
\begin{aligned}
&\left< (u(x,T),p(x,T),v(x,T),q(x,T)),(U_1(t)(\tilde{u}_0,\tilde{p}_0),U_2(t)(\tilde{v},\tilde{q}))  \right>_{X} = \\
&= \left<\wedge(G_1,G_2),(\tilde{u}_0,\tilde{p}_0,\tilde{v}_0,\tilde{q}_0)  \right>_{X} - \left<(G_1,G_2),(\tilde{u}_0,\tilde{p}_0,\tilde{v}_0,\tilde{q}_0)\right>_{Y} 
\end{aligned}
\]
for all $(\tilde{u}_0,\tilde{p}_0,\tilde{v}_0,\tilde{q}_0)\in Y$. Using (\ref{Isomorfismo_F}), we have that $(u(x,T),p(x,T),v(x,T),q(x,T))$ is a functional null on $Y$. In conclusion we prove the following theorem
\begin{theorem}{\label{TF}}
Assuming the hypothesis of theorem \ref{Desigualdade_Final}. If given $T>T_0$  and initial condition  $(u_0,p_0,v_0,q_0)\in Y^{'}$ the problem  (\ref{Par_Ecu_Ondas_ST1}), (\ref{Par_Ecu_Ondas_ST2}), (\ref{Cond_Multilayer_1}), (\ref{Cond_Multilayer_1}). Then, there is exist an control $Q(x,t)\in C(0,T;L^{2}(\Gamma_0))$ such that the corresponding solution $(u,p,v,q)$ with the boundary condition 
\begin{equation}
	\left\{\begin{aligned}
		& u.\eta  = Q, \quad \mbox{in}\quad S_0\times(0,T)\\
		& p = 0, \quad \mbox{in}\quad S_1\times(0,T)\\
		& u(x,0) = u_{0}(x), p(x,0) =p_{0}(x)
 	\end{aligned}\right.	
\end{equation}
and
\begin{equation}
	\left\{\begin{aligned}
		& q  = P, \quad \mbox{in}\quad S_0\times(0,T)\\
		& q = 0, \quad \mbox{in}\quad S_1\times(0,T)\\
		& v(x,0) =v_{0}(x), q(x,0) =q_{0}(x)
 		\end{aligned}\right.
\end{equation}
with $P=-\beta\gamma^{-1}Q$, satisfy , for $x\in \Omega$
\begin{equation}
	\left\{\begin{aligned}
		& u(x,T)  = 0\\
		& p(x,T)  = 0\\
		& v(x,T)  = 0\\
		& q(x,T)  = 0
 		\end{aligned}\right.
\end{equation}
\end{theorem}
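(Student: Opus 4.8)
The plan is to run the Hilbert Uniqueness Method exactly along the lines already prepared by Theorem~\ref{Desigualdade_Final}, the uniqueness Corollary~\ref{Produto_Interno_D}, and the construction of the map $\wedge$. The starting point is the combined observability inequality of Theorem~\ref{Desigualdade_Final}: for $T>T_0$,
\[
(T-T_0)\sum_{k=0}^{m}\int_{\Omega_k}\left[\beta^k\mid u^k\mid^2+\alpha^k(p^k)^2+\tau^k\mid v^k\mid^2+\gamma^k(q^k)^2\right]dx\le C\int_{0}^{T}\int_{S_0}\left[\alpha p-\tau(v.\eta)\right]^2\frac{\partial h}{\partial\eta}dS_0\,dt.
\]
Together with Corollary~\ref{Produto_Interno_D}, which forces the right-hand side to vanish only for the trivial solution, this shows that the quantity (\ref{Formula_PI}) is a genuine norm on $V_1\cap D(A_1)\times V_2\cap D(A_2)$; completing in this norm produces the Hilbert space $Y$, while the inequality yields the continuous embedding recorded in (\ref{Compara_Normas}) and hence $X\subseteq Y'$.

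Next I would make the operator $\wedge\colon Y\to Y'$ precise. Given $(G_1,G_2)=((w_0,k_0),(m_0,l_0))\in Y$, evolve the homogeneous adjoint systems by the unitary groups, $(w,k)=U_1(t)(w_0,k_0)$ and $(m,l)=U_2(t)(m_0,l_0)$, form the boundary data $Q=\beta^{-1}(\alpha k-\tau m.\eta)$ and $P=-\beta\gamma^{-1}Q$ as in (\ref{Funcoes_Bordo}), and solve the nonhomogeneous systems (\ref{Par_Ecu_Ondas_Multilayer_1})--(\ref{Par_Ecu_Ondas_Multilayer_2}) backward with zero terminal data in the transposition sense of Definition~\ref{Cond_0}; set $\wedge(G_1,G_2)=(u,p,v,q)\mid_{t=0}$. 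The heart of the argument is the identity (\ref{Isomorfismo_F}), which read against any test datum $(\tilde u_0,\tilde p_0,\tilde v_0,\tilde q_0)\in Y$ gives
\[
\left<\wedge(G_1,G_2),(\tilde u_0,\tilde p_0,\tilde v_0,\tilde q_0)\right>_{X}=\left<(G_1,G_2),(\tilde u_0,\tilde p_0,\tilde v_0,\tilde q_0)\right>_{Y}.
\]
This exhibits $\wedge$ as precisely the canonical duality map attached to the $Y$-inner product, so coercivity (the observability inequality) together with boundedness makes $\wedge$ an isomorphism of $Y$ onto $Y'$.

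Finally I would invert. Given the prescribed target $(u_0,p_0,v_0,q_0)\in Y'$, set $(G_1,G_2)=\wedge^{-1}((u_0,p_0),(v_0,q_0))$ and read off the control $Q$ and the slaved control $P=-\beta\gamma^{-1}Q$ through (\ref{Funcoes_Bordo_Final}); by construction $Q\in C(0,T;L^2(\Gamma_0))$, and requirement (b), that $P$ be expressed in terms of $Q$, is automatic. The associated solution of (\ref{Par_Ecu_Ondas_ST1})--(\ref{Par_Ecu_Ondas_ST2}) built in transposition form has zero terminal state by the very definition of $\wedge$; to certify this I would evaluate the duality formula (\ref{Dualidad_X}) at $t=T$ and use (\ref{Isomorfismo_F}) to conclude that $(u(.,T),p(.,T),v(.,T),q(.,T))$ is the null functional on $Y$, whence $u(.,T)=p(.,T)=v(.,T)=q(.,T)=0$ in $\Omega$.

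The step I expect to be the main obstacle is not the inversion of $\wedge$ but the analytic foundation underneath it: establishing that the transposition solutions of Definition~\ref{Cond_0} are well posed in $C(0,T;Y')$ and, in tandem, a hidden-regularity direct inequality bounding $\int_0^T\int_{\Gamma_0}[\alpha p-\tau(v.\eta)]^2\,\frac{\partial h}{\partial\eta}\,dS_0\,dt$ from above by the $X$-energy of the data. Theorem~\ref{Desigualdade_Final} supplies only the lower bound; the matching upper bound is what renders the controls genuinely $L^2(\Gamma_0)$, makes the boundary integrals in (\ref{Dualidad_X}) meaningful, and guarantees that $\wedge$ maps into $Y'$ rather than a larger space. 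I would prove it by a multiplier computation companion to the one behind the observability estimate, taking care that the interface terms on each $\Gamma_k$ cancel under the monotonicity hypotheses (\ref{Mono_coef}), (\ref{Mono_2}) and the compatibility relations (\ref{Aux_Theo_Final}).
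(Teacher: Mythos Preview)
Your proposal is correct and follows essentially the same HUM route as the paper: define the norm~(\ref{Formula_PI}) via Theorem~\ref{Desigualdade_Final} and Corollary~\ref{Produto_Interno_D}, build $Y$ by completion, construct $\wedge$ from the backward transposition solution with controls~(\ref{Funcoes_Bordo}), use identity~(\ref{Isomorfismo_F}) to see that $\wedge$ is the Riesz map and hence an isomorphism onto $Y'$, and then invert and evaluate~(\ref{Dualidad_X}) at $t=T$. Your closing paragraph on the direct (hidden-regularity) inequality and the well-posedness of the transposition solution is a fair diagnosis: the paper takes these for granted rather than proving them, so flagging them as the real analytic work is appropriate, not a deviation from the paper's strategy.
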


\section{Acknowledgment}
We thank to professors G. Perla Mezala and Boris Kapitonov for the suggestions in this article.

\end{document}